\newtheorem{thm}{Theorem}[section]    % Standard theorem environment
\newtheorem{lem}[thm]{Lemma}          % Lemma environment with numbering 
\theoremstyle{definition}
\newtheorem{prop}[thm]{Proposition}
\newcommand{\PR}{\mathbb{P}}
\newcommand{\R}{\mathbb{R}}
\newcommand{\C}{\mathbb{C}}
\title{A Classification of subgroups of $SL(4,{\mathbb R})$ Isomorphic to ${\mathbb R}^3$ and Generalized Cusps in Projective 3 Manifolds %\tnoteref{mytitlenote}
}
\author{Arielle Leitner %\fnref{myfootnote}
}
\begin{document}

\maketitle

\begin{abstract}This paper uses work of Haettel to classify all subgroups of $PGL_4(\R)$ isomorphic to $(\R^3 , +)$, up to conjugacy. We use this to show there are 4 families of generalized cusps up to projective equivalence in dimension 3.
\end{abstract}

%\begin{keyword}
%Convex Projective Manifolds, Abelian Lie Groups
%\MSC[2010] 57Mxx
%\end{keyword}

%\linenumbers

There are 15 conjugacy classes of subgroups in $PGL_4(\R)$ isomorphic to $( \R ^3, +)$, see Theorem \ref{all15} for the precise list.  Classification of abelian subalgebras of $\mathfrak{sl}(n,\R)$ has long been of interest and closely related problems have been studied in \cite{Haettel}, \cite{IM}, \cite{SupTysh}, and \cite{WZ}.    The remainder of the paper applies this result to classify \emph{generalized cusps} in convex projective manifolds of dimension 3. 

Suppose $M$ is a manifold of dimension greater than 2.  By Mostow-Prasad rigidity, a finite volume hyperbolic structure on $M$ is unique up to isometry.  %If $\Gamma = \pi_1 M$, and $\rho: \Gamma \to \textrm{Isom}^+(\mathbb{H}^n)$ is a representation, then $M \cong \mathbb{H}^n / \rho(\Gamma)$ up to isometry.  
The notion of a hyperbolic structure on a manifold may be generalized to a \emph{properly convex projective structure} as follows.   A 
convex set with non-empty interior $\Omega \subset \R P^n$ is \emph{properly convex} if the closure is disjoint from some projective hyperplane. 
A {\em properly convex manifold} is $M = \Omega / \Gamma$ where $\Gamma\subset PGL_{n+1} (\R)$  is discrete  and
acts freely on $\Omega$.  
 If $n=3$ such an $M$ is a \emph{generalized cusp} if 
 $M$ is diffeomorphic to $T^2 \times [0, \infty)$ and $\partial M$
 is   strictly convex (contains no line segment). 
%  This chapter makes progress in classifying generalized cusps

In the case of convex projective structures, there is no notion of Mostow rigidity, so there is a richer deformation theory.  When $M$ is closed, Koszul \cite{Koszul}, shows that small deformations in the holonomy of hyperbolic structures yield properly convex projective structures. When $M$ is not compact, Koszul's result no longer holds. Cooper, Long and Tillmann  \cite{CoopLongTil}, have shown Koszul's result can be extended if certain conditions on the holonomy of each end are satisfied.   In dimension 3, this is as follows:

 %The following results are established in \cite{CoopLongTil}. 
 The holonomy, $\Gamma$,
 of a generalized cusp is conjugate to a lattice in an upper triangular subgroup $H \subset PGL_4 (\R)$, and $H$ is called a \emph{cusp Lie group} in this paper.  
 These are characterized by the property that $H\cong \R^2$ %is conjugate into an upper triangular subgroup 
%in $PGL_4 (\R)$ 
and there is a point $x\in\R  P^3$ such that $H\cdot x\subset \R P^3$ is a strictly
  convex surface. Moreover $H$ is a subgroup of an upper triangular group $G \subset PGL_4(\R)$ with $G\cong \R^3$. 
  
 \begin{thm}\label{4cusps}Each cusp Lie group in $PGL_4 (\R)$ is conjugate to exactly one of the following groups:  
 $$\begin{array} {l}
C([r:s:t]) = \left\{ \left( \begin{array}{cccc}
 e^{a} & 0 & 0&0 \\
0 & e^{b} & 0 &0  \\
 0 & 0 & e^{c} &0\\
 0&0&0& e^{-a-b-c}  \end{array} \right)\ :  \begin{array}{c} a,b,c \in \R \\  ar +bs +ct =0 \end{array} \right \},
  \begin{array}{l} 
   [r:s:t] \in \R P^2  \\
 % \textrm{contained in the kernel of a linear functional on } \R^3 \\
% \textrm{and }
%(-t + r - 3 s) (t + 3 r - s) (-t + r + s) (3t + r + s) >0
r\geq s \geq t >0 
 \end{array}$$
 %\begin{cases} 
 %r>1, & s > 3r+1 \\
% r>1, & 1-r < s < \frac{r-1}{3}\\
% r>1, & s< -r-3\\
% -2 < r \leq 1 , & s> 1-r \\
%  -2 < r \leq 1 & -r-3 < s < \frac{r-1}{3}\\
%   -1 < r \leq \frac{-1}{2} , & s> 1-r \\
 %  -1 < r \leq \frac{-1}{2} , & 3r+1 < s<   \frac{r-1}{3}\\
%  -1 < r \leq \frac{-1}{2} , & s<  -r-3\\
%  \frac{-1}{2} < r \leq 0 , & s> 1-r
% \end{cases}
 \vspace{.1in}\\

$$E(s)=\left\{\left( \begin{array}{cccc}
e^{b-a} &0&0&0 \\
 0 & e^a & e^a (bs + a) &0  \\
 0 & 0 & e^a &0\\
 0&0&0& e^{-a-b}  \end{array} \right) :\ a,b\in{\mathbb R} \right \}, \textrm{where }    0 \leq s < 1/2  $$
 %[r:s] \in \R P^1 \textrm{ is fixed and }  (3r-s)(r+s)>0
 %\begin{cases} 
% r >0, & s>3r\\
% r<0 , & 3r<s<-r
 %\end{cases}$$

  \vspace{.1in}\\
 
%\begin{array}{l}
$$F= \left\{\left( \begin{array}{cccc}
 e^a & e^a b & \frac{1}{2}e^a(b^2 + 2a)&0 \\
 0 & e^a & e^a b &0  \\
 0 & 0 & e^a &0\\
 0&0&0& e^{-3a}  \end{array} \right)  :\ a,b\in{\mathbb R} \right \}, $$

% \hspace{3in}
 %\textrm{ with }  r=0 \textrm{ and } r=  1 \hspace{3in}  
% \textrm{with } r=1  \hspace{3in}

 \vspace{.1in}\\

$$N= \left\{\left( \begin{array}{cccc}
 1 & a& b & \frac{1}{2} (a^2 + b^2) \\
 0 & 1&0 &a  \\
 0 & 0 & 1 &b\\
 0&0&0& 1  \end{array} \right)  :\ a,b\in{\mathbb R} \right \}. 
\end{array} 
$$ 
%The map $(r,s) \mapsto \widehat{C}(r,s)$ is 24 to 1, unless %$(r,s) =(0,0)$ in which case it is 4 to 1. 
%$(r,s) =
%  \begin{cases}
%   (0,0) & \text{then } \phi \textrm{ is 4 to 1} \\
%   (r,r), (-1,r), (0,r), (r,-1), \textrm{ or }(r,0)       & \text{then } \phi \textrm{ is 12 to 1.}
%  \end{cases}$
%The map $[r:s] \mapsto \widehat{E_1}(r,s) $ is 2 to 1.
%$ \widehat{E_1}(r,s)$ is conjugate to %$\widehat{ E_1}(\frac{-s}{2}, \frac{-s}{2}+r)$.
%$\widehat{ E_1}(-r, s-2r)$.
%Every group $\widehat{C}(r,s,t)$ is conjugate to one where %$(-t + r - 3 s) \geq (-t + r + s) \geq (3t + r + s) >0.$
%$r \geq s \geq t >0$. 
%Every group $\widehat{E_1}(r,s)$ is conjugate to one with $ 3> s > -1$ and $r=1$. 
\end{thm}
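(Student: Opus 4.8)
The plan is to use Theorem~\ref{all15} as the backbone. Since a cusp Lie group $H\cong\R^2$ is, by its defining characterization, contained in an upper triangular $G\cong\R^3\subset PGL_4(\R)$, and since every such $G$ is conjugate to one of the fifteen groups listed there, I would first conjugate so that $G$ is one of those standard representatives. The classification then reduces to a local problem inside each fixed $G$: find all two-dimensional subgroups $H\subset G$ possessing a point $x\in\R P^3$ with $H\cdot x$ strictly convex, and then organize the answers up to conjugacy.

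For a fixed $G$ with abelian Lie algebra $\mathfrak g$, every two-dimensional subgroup is $H=\exp(\mathfrak h)$ for a two-dimensional subspace $\mathfrak h\subset\mathfrak g$ (automatically a subalgebra). Choosing a spanning pair $X,Y\in\mathfrak h$, I would parametrize the orbit by $\gamma(u,v)=\exp(uX+vY)\cdot x$; since $X$ and $Y$ commute, the tangent plane is spanned by $X\gamma,Y\gamma$ and the second-order behavior is carried by $X^2\gamma$, $XY\gamma$, $Y^2\gamma$. Passing to an affine chart in which the orbit is a graph, strict convexity is equivalent to the associated second fundamental form (the Hessian of the graphing function) being definite. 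Because $H$ acts transitively on its orbit, this need only be checked at the single base point, so the problem becomes a finite linear-algebra computation in each case. Running it across the fifteen groups isolates exactly the pairs $(\mathfrak h,x)$ with strictly convex orbits; for instance one checks directly that $N$ acting on $e_4$ produces the paraboloid first coordinate $\tfrac12(a^2+b^2)$, giving the normal form $N$, and the analogous computations produce $C$, $E$, and $F$.

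It then remains to normalize the surviving families and to establish the ``exactly one'' claim. Using the residual conjugations preserving each $G$ (its normalizer) together with the freedom to reselect the spanning pair of $\mathfrak h$, I would reduce the defining data to the stated canonical forms; this is what pins down the fundamental domain $r\geq s\geq t>0$ for the projective parameter $[r:s:t]$ in $C([r:s:t])$ and the range $0\leq s<1/2$ in $E(s)$, eliminating redundancy within each family. Distinctness across families follows from conjugacy invariants of $H$: $C$ is diagonalizable, $E(s)$ contains a single Jordan block of size two, $F$ a Jordan block of size three, and $N$ is unipotent, so members of different families are never conjugate, while the continuous invariants ($[r:s:t]$ and $s$) separate members within a family.

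I expect the main obstacle to be the convexity step: converting ``$H\cdot x$ strictly convex'' into a clean definiteness condition and then discarding the many two-dimensional subgroups whose orbits are ruled or otherwise degenerate, all while locating the correct base point $x$ for the survivors. The diagonalizable case $C$ looks the most delicate, since there one must determine precisely which constraint planes $ar+bs+ct=0$ yield a strictly convex exponential surface and then reduce $[r:s:t]$ to its stated fundamental domain.
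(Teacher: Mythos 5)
Your proposal follows essentially the same route as the paper: reduce to the fifteen groups of Theorem \ref{all15}, detect strictly convex orbits of two-dimensional subgroups via definiteness of the second fundamental form checked at a single point (the paper's Proposition \ref{fund_form} and Proposition \ref{4groups}, done there by explicit computation in each case), and then normalize the surviving families $C$, $E$, $F$, $N$ to fundamental domains while separating families by their weight/Jordan structure (the paper's Proposition \ref{conjsubset}). The only cosmetic difference is that you phrase the within-family normalization via the normalizer of $G$, whereas the paper constrains arbitrary conjugating matrices directly through their action on eigenvectors and weight spaces; both amount to the same analysis.
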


%If a group $G \cong ( \R ^n ,+)$ then we say $G$ is an \emph{$n$-dimensional group}.  
%Let $\Omega \subset \R P^3$ be a properly convex set in the complement of a hyperplane, and let $M \cong \Omega / \Gamma$ be a %generalized cusp, where $\Gamma = \textrm{Hol} (\pi_1 (M) )\cong \Z \oplus \Z$. % Denote by $C(\Gamma)$ the centralizer of $\Gamma$ in $PGL_4 (\R)$. 
%\begin{lem} There exists an %2 dimensional 
%abelian Lie group $H \cong \R ^2$ such that $\Gamma \subset H \subset PGL_4 (\R)$. 
%\end{lem}
%\begin{proof} The group $\Gamma$ is abelian, so  $\Gamma \subset C(C (\Gamma)) \subset C(\Gamma) \subset PGL_4 (\R)$, and $H':= C(C (\Gamma)) $ is an algebraic group, which therefore has finitely many connected components.  Since $\Gamma$ is abelian and nontrivial, $C(\Gamma)$ is abelian. Moreover, $H'$ is abelian, since $H' \subset C (\Gamma)$.  From the theory of commutative algebraic groups (see \cite{OV}, p. 116),  $H'  \cong \R ^k \times (\R^{*}) ^l$, where $k,l \in \N$.  Take $H \leq H' $,  a finite index subgroup isomorphic to $\R^2$ which contains $\Gamma$. 
%\end{proof} 
%%%%I WRONGLY APPLIED OV HERE: NEED THE FIELD TO BE ALGEBRAICALLY CLOSED
%
%
 %A \emph{radial flow} on $\R P^n$ is a one-parameter subgroup of $PGL_n(\R)$ which fixes each point in a projective hyperplane.   In the remainder of the paper we prove Theorem \ref{4cusps} as outlined: 

\begin{proof} [Sketch proof of Theorem \ref{4cusps}] 
The subgroups  $G\subset PGL_4(\R)$ isomorphic to $( \R ^3, +)$ are classified in Theorem \ref{all15}. 
For each such $G$, Proposition \ref{4groups} lists the 2-dimensional subgroups $H$ with a strictly convex orbit.  Proposition \ref{conjsubset}
determines which of these 2 dimensional groups are conjugate. 
\end{proof} 

It follows from \cite{CoopLongTil}, and it is easy to check directly, that every lattice in one of the Lie groups in Theorem \ref{4cusps} is the holonomy of a generalized cusp.
 It is shown in \cite{Ballas} and \cite{Gye-Seon} that there are properly convex projective structures on the Figure 8 knot complement
 with generalized cusps of types $C,N$ and $F$. At the time of writing it is not known if it also admits one of type $E$.

\section{Subgroups of $PGL_4(\R)$ Isomorphic to $(\R^3,+)$  }%%%%%%%%%%%%%%%%%%%%

The classification of 3-dimensional abelian subalgebras in $\mathfrak{gl}_4(\C)$ is given in \cite{SupTysh}, p.134, and in \cite{IM}, section 3.1.  The classification of maximal abelian subalgebras of $\mathfrak{sl}_4 (\R)$ is given as the main result of \cite{WZ}, but there are some of dimension larger than 3, and some with compact factors.   However, the author was unable to find a classification of 3-dimensional abelian subalgebras over $\R$. 

Let $G \leq PGL_{n+1}(\R)$ be a group, and $p \in \R P^n$.  
The \emph{orbit} of $p$ under $G$ is the set of images $\{ g.p : g \in G\}$.  
The orbits of $G$ acting on $\R P^n$ give a partition of $\R P^n$.  
 An \emph{orbit closure of $G$} is the {\em Zariski}-closure of an orbit of $G$. 
An orbit closure may contain an orbit of lower dimension.

\begin{thm}\label{all15}  In $PGL_4(\R)$ there are precisely 15 conjugacy classes of subgroups isomorphic to $(\R ^3, +)$: 
$$
\begin{array}{cccc} 
C& E_1 & F_0 & F_1\\
 \left( \begin{array}{cccc}
a & 0 & 0&0 \\
0 & b & 0 &0  \\
0 & 0 & c &0\\
0&0&0& \frac{1}{abc}  \end{array} \right) &
 \left( \begin{array}{cccc}
a & 0 & 0&0 \\
0 & b & c &0  \\
0 & 0 & b &0\\
0&0&0& \frac{1}{ab^2}  \end{array} \right)  &
 \left( \begin{array}{cccc}
a & b & 0&0 \\
0 & a & 0 &0  \\
0 & 0 & \frac{1}{a} &c\\
0&0&0& \frac{1}{a}  \end{array} \right) &
\left( \begin{array}{cccc}
a & b & c&0 \\
0 & a & b &0  \\
0 & 0 & a &0\\
0&0&0& \frac{1}{a^3}  \end{array} \right)  
\end{array}
$$
$$ \begin{array}{cccc}
F_2 & F_3 & N_1 & N_2 \\
\left( \begin{array}{cccc}
a & b & c&0 \\
0 & a & 0 &0  \\
0 & 0 & a &0\\
0&0&0& \frac{1}{a^3}  \end{array} \right) &
 \left( \begin{array}{cccc}
a & 0 & c&0 \\
0 & a & b &0  \\
0 & 0 & a &0\\
0&0&0& \frac{1}{a^3}  \end{array} \right) &
 \left( \begin{array}{cccc}
1 & a & b&c \\
0 & 1 & a &b  \\
0 & 0 & 1 &a\\
0&0&0& 1  \end{array} \right) &
  \left( \begin{array}{cccc}
1 & a & b&c \\
0 & 1 & a &0  \\
0 & 0 & 1 &0\\
0&0&0& 1  \end{array} \right) 
\end{array}$$
$$\begin{array}{cccc}
 N_3 &N_4&N'_4 & N_5\\
 \left( \begin{array}{cccc}
1 & 0 & 0&c \\
0 & 1 & a &b  \\
0 & 0 & 1 &a\\
0&0&0& 1  \end{array} \right) &
  \left( \begin{array}{cccc}
1 & a & b&c \\
0 & 1 & 0 &b  \\
0 & 0 & 1 &a\\
0&0&0& 1  \end{array} \right) &
  \left( \begin{array}{cccc}
1 & a & b&c \\
0 & 1 & 0 &a  \\
0 & 0 & 1 &b\\
0&0&0& 1  \end{array} \right) &
 \left( \begin{array}{cccc}
1 & 0 & b&c \\
0 & 1 & a &b  \\
0 & 0 & 1 &0\\
0&0&0& 1  \end{array} \right) 
\end{array}$$
$$\begin{array} {ccc} 
N_6 & N_7 & N_8\\
\left( \begin{array}{cccc}
1 & a & 0&c \\
0 & 1 & 0 &0  \\
0 & 0 & 1 &b\\
0&0&0& 1  \end{array} \right) &
 \left( \begin{array}{cccc}
1 & 0 & 0&c \\
0 & 1 & 0 &b  \\
0 & 0 & 1 &a\\
0&0&0& 1  \end{array} \right) &
 \left( \begin{array}{cccc}
1 & a & b&c \\
0 & 1 & 0 &0  \\
0 & 0 & 1 &0\\
0&0&0& 1  \end{array} \right) 
\end{array} $$
where each matrix represents a group by taking the union over all possible $a,b,c \in \R$ or $\R _+$, as appropriate. %and all diagonal entries are positive. 
\end{thm}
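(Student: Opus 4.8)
The plan is to transfer the problem to the Lie algebra and reduce it to a finite combinatorial classification using the Jordan decomposition. A subgroup $G\le PGL_4(\R)$ abstractly isomorphic to $(\R^3,+)$ is a connected $3$-dimensional abelian Lie subgroup, so it is $\exp(\mathfrak g)$ for a $3$-dimensional abelian subalgebra $\mathfrak g\subset\mathfrak{sl}_4(\R)$, and two such subgroups are conjugate in $PGL_4(\R)$ precisely when the subalgebras are conjugate under the adjoint action of $GL_4(\R)$. Thus it suffices to classify, up to conjugacy, the $3$-dimensional abelian $\mathfrak g$ for which $\exp|_{\mathfrak g}$ is injective with image isomorphic to $\R^3$. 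The first step is to show that this last requirement forces every $X\in\mathfrak g$ to have only real eigenvalues: two independent purely imaginary characters make $\exp$ a covering of a torus, hence non-injective, while a single elliptic direction with no accompanying growth produces a circle subgroup; both contradict $G\cong\R^3$. A short case-check on the admissible complex characters of a commuting family in $\mathfrak{sl}_4(\R)$ eliminates all of these, after which $\mathfrak g$ is simultaneously conjugate over $\R$ into the upper-triangular matrices.

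Next I would apply the Jordan decomposition $X=X_s+X_n$ entrywise. Since $\mathfrak g$ is abelian, the semisimple parts form a subalgebra $\mathfrak s$ of a diagonal Cartan and the nilpotent parts form a commuting nilpotent subalgebra $\mathfrak n$, with $\mathfrak g=\mathfrak s\oplus\mathfrak n$ as vector spaces and $\mathfrak n$ contained in the centralizer of $\mathfrak s$. The subalgebra $\mathfrak s$ prescribes a simultaneous weight-space decomposition of $\R^4$, and equal weights determine the blocks of the Levi subgroup in which $\mathfrak n$ must lie. This sorts the classification by the pair $(\dim\mathfrak s,\dim\mathfrak n)\in\{(3,0),(2,1),(1,2),(0,3)\}$. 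The cases $(3,0)$ and $(2,1)$ are essentially immediate, giving the full diagonal Cartan $C$ and, after fixing the unique admissible repeated-eigenvalue block, the family $E_1$. The case $(1,2)$ reduces to listing the $2$-dimensional abelian nilpotent algebras inside the centralizer of a single diagonal direction, which I expect to return exactly $F_0,F_1,F_2,F_3$ sorted by the Jordan type of the nilpotent part.

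The bulk of the work, and the main obstacle, is the unipotent case $(0,3)$: classifying $3$-dimensional abelian subalgebras of the strictly upper-triangular $4\times4$ matrices up to conjugacy, which should yield $N_1,\dots,N_8$ together with $N_4'$. Here I would start from the classification of $3$-dimensional abelian subalgebras of $\mathfrak{gl}_4(\C)$ in \cite{SupTysh} and \cite{IM} and from \cite{Haettel}, and then perform the descent from $\C$ to $\R$. This descent is the delicate point, since one complex conjugacy class can split into two inequivalent real classes; this is presumably the source of the pair $N_4,N_4'$, and it means I must track the real structure rather than only the complex normal form.

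Finally I would verify that the fifteen groups are pairwise non-conjugate and that each is genuinely isomorphic to $\R^3$. Non-conjugacy I would read off from conjugacy invariants: the weight-partition type, the ranks and nullities of individual elements, the dimensions of the common kernel and of the common image, and the type of the invariant flag. The isomorphism $G\cong\R^3$ follows from the explicit upper-triangular, real-eigenvalue normal forms, for which $\exp$ is visibly injective with non-compact image. I anticipate that the two genuine difficulties are this $\C$-to-$\R$ descent and the bookkeeping required to certify that the list in the $(0,3)$ case is complete.
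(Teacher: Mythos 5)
Your overall architecture (force real eigenvalues, organize by the Jordan decomposition, separate by invariants) is genuinely different from the paper's, which conjugates into the Borel subgroup via Winternitz--Zassenhaus, quotes Haettel's classification of three-dimensional abelian subalgebras of the \emph{real} Borel subalgebra, exponentiates that list, and distinguishes the resulting groups by orbit closures. But your proposal has a genuine gap at its load-bearing step: the claim that commutativity gives $\mathfrak{g}=\mathfrak{s}\oplus\mathfrak{n}$ as vector spaces is false in general. Commutativity only gives that $\mathfrak{s}$ and $\mathfrak{n}$ are abelian subalgebras with $\mathfrak{g}\subseteq\mathfrak{s}\oplus\mathfrak{n}$, and the inclusion is strict whenever the nilpotent part of an element is a nonzero linear function of its semisimple part, with neither part lying in $\mathfrak{g}$. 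The two-dimensional cusp algebras in this very paper are counterexamples: the Lie algebra of $E(r,s)$, with elements $\mathrm{diag}(a-b,\,b,\,b,\,-a-b)+(ar+bs)E_{23}$ and $(r,s)\neq(0,0)$, has $\dim\mathfrak{s}=2$ and $\dim\mathfrak{n}=1$, so $\mathfrak{s}\oplus\mathfrak{n}$ is the three-dimensional algebra of $E_1$ and strictly contains $\mathfrak{g}$. The splitting you need is true for \emph{three}-dimensional abelian subalgebras of $\mathfrak{sl}_4(\R)$ with real eigenvalues, but only via a dimension count you must supply: for each weight pattern of $\mathfrak{s}$ one checks $\dim\mathfrak{s}+\dim\mathfrak{n}\leq 3$ (if $\dim\mathfrak{s}=3$ it is a Cartan and $\mathfrak{n}=0$; if $\dim\mathfrak{s}=2$ exactly one weight coincidence can survive, since two coincidences force the weights to span a line, so $\dim\mathfrak{n}\leq 1$; if $\dim\mathfrak{s}=1$ the nilpotent part of its centralizer is either $\langle E_{12},E_{34}\rangle$ or a Heisenberg algebra, whose abelian subalgebras have dimension at most $2$), whence $\mathfrak{g}\subseteq\mathfrak{s}\oplus\mathfrak{n}$ is an equality. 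Without this argument the case division by $(\dim\mathfrak{s},\dim\mathfrak{n})$ never gets started.

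Two further points. First, the complex-to-real descent you identify as the main difficulty in the unipotent case can be bypassed entirely: Haettel's Proposition 6.1 already classifies abelian subalgebras of the Borel subalgebra of $\mathfrak{sl}_4(\R)$ up to \emph{real} conjugacy, and this is exactly how the paper proceeds; only the Suprunenko--Tyshkevich and Iliev--Manivel references are over $\C$. Second, the invariants you propose for pairwise non-conjugacy are too coarse for the hardest pair: $N_4$ and $N_4'$ have identical weight type, identical rank distributions of elements, identical common kernel $\langle e_1\rangle$ and common image $\langle e_1,e_2,e_3\rangle$, and literally the same lattice of invariant subspaces (one fixed line, a $\PR^1$-family of invariant planes, one invariant hyperplane). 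What separates them over $\R$ is quadratic: writing elements of the two algebras as in Lemma \ref{N4}, one has $X^2=2ab\,E_{14}$ for $\mathfrak{N}_4$ but $X^2=(a^2+b^2)E_{14}$ for $\mathfrak{N}'_4$, so $\mathfrak{N}_4$ contains a two-dimensional subalgebra of square-zero elements while $\mathfrak{N}'_4$ does not; the definiteness of this form is the real invariant (and its isotropy over $\C$ is why the pair merges there). Your verification step needs this quadratic refinement, or the paper's orbit-closure computation, to certify all fifteen classes.
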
 

\begin{proof} Every subgroup $\exp(\mathfrak g) \leq SL_4(\R)$ isomorphic to $\R ^3$ is conjugate to an upper triangular group. One way
to see this is as follows. Winternitz and Zassenhaus (\cite{WZ} p.117)  classify maximal abelian subalgebras of $\mathfrak{sl}_4( \R)$.  
Under the exponential map one gives
$\R^2\times S^1$ which we can ignore.
The remainder give an upper triangular group
isomorphic to $\R^3$ or $\R^4$. Since $\mathfrak g$ is contained in some such maximal subalgebra the claim follows.

Haettel, \cite{Haettel} Proposition 6.1,  proves every  
3-dimensional abelian Lie subalgebra of the Borel subalgebra in $\mathfrak{sl}_4( \R)$ is  (up to conjugacy
in the Borel group)  one of 10 types. There are now two steps to the proof of the theorem.
Step 1: exponentiate each algebra in Haettel's list into $SL(4,\R)$, and show these groups are in the list above.
Step 2: show none of the groups in this list are conjugate. 

\noindent \textbf{Step 1:}  Exponentiate the algebras in Haettel's list. 

\begin{description} 
\item{Type 1:}  The Cartan subalgebra 
$$\mathfrak{a}= \left( \begin{array}{cccc}
a & 0 & 0&0 \\
0 & b & 0 &0  \\
0 & 0 & c &0\\
0&0&0&-a-b-c  \end{array} \right)$$ 
has $\textrm{exp}(\mathfrak{a})=C$. 

\item{Type 2:} These are algebras with three distinct weights and one off diagonal entry, which consist of matrices of the forms: 
$$ \mathfrak{i}_{\alpha}= \left( \begin{array}{cccc}
a & b & 0&0 \\
0 & a & 0 &0  \\
0 & 0 & c &0\\
0&0&0&-2a-c  \end{array} \right), 
\mathfrak{i}_{\beta}= \left( \begin{array}{cccc}
a & 0 & 0&0 \\
0 & b & c &0  \\
0 & 0 & b &0\\
0&0&0&-2b-a \end{array} \right), $$
$$ \mathfrak{i}_{\gamma}=\left( \begin{array}{cccc}
a &  & 0&0 \\
0 &-2b -a & 0 &0  \\
0 & 0 & b &c\\
0&0&0&b  \end{array} \right), 
\mathfrak{i}_{\alpha + \beta} =  \left( \begin{array}{cccc}
a & 0 & c&0 \\
0 & b & 0 &0  \\
0 & 0 & a &0\\
0&0&0&-2a-b  \end{array} \right), $$
$$\mathfrak{i}_{\beta + \gamma}=  \left( \begin{array}{cccc}
a & 0 & 0&0 \\
0 & b & 0 &c  \\
0 & 0 & -2b-a &0\\
0&0&0& b  \end{array} \right), 
 \mathfrak{i}_ {\alpha + \beta + \gamma} = \left( \begin{array}{cccc}
a & 0& 0&c \\
0 & b & 0 &0  \\
0 & 0 & -2a -b &0\\
0&0&0&a  \end{array} \right). $$
The $\mathfrak{i}_{\delta}$ are all conjugate in $\mathfrak{sl}_4 (\R)$ by permutation matrices, and $\textrm{exp}(\mathfrak{i}_\beta)= E_1$.

\item{Types 3 and 5:} These are algebras with 2 distinct weights.  %that decompose as a direct sum of a 3-dimensional and 1-dimensional algebra.   
Let $[x: y ] \in \R P^1$ be fixed. Types 3 and 5 are algebras consisting of matrices of the forms: 
$$ \mathfrak{i}^{\alpha, \beta}_{[x:y]}=
 \left( \begin{array}{cccc}
c & ax & b&0 \\
0 & c & ay &0  \\
0 & 0 & c &0\\
0&0&0&-3c \end{array} \right), 
\mathfrak{i}^{ \beta, \gamma}_{[x:y]}=
 \left( \begin{array}{cccc}
-3c & 0 & 0&0 \\
0 & c & ax &b  \\
0 & 0 & c &ay\\
0&0&0&c \end{array} \right).$$
The algebras $\mathfrak{i}^{\alpha, \beta}_{[x:y]}$ and $\mathfrak{i}^{ \beta, \gamma}_{[x:y]}$ are conjugate in $\mathfrak{sl}_4 (\R)$ by a permutation matrix.  Note $\textrm{exp} (\mathfrak{i}^{\alpha, \beta}_{[0:1]}) = F_3$,  $\textrm{exp} (\mathfrak{i}^{\alpha, \beta}_{[1:0]}) = F_2$, and $\textrm{exp} (\mathfrak{i}^{\alpha, \beta}_{[x:y]})$ is conjugate to $F_1$, for $(x,y) \neq (0,0)$. 
 %Types 3 and 5 are conjugate in $\mathfrak{sl}_4(\R)$ by a symmetric matrix. Choices of $[x:y] \in \R P^1$ give three conjugacy classes of groups: $[x:y]=[0:1]$ yields $F_3$,  $[x:y]=[1:0]$ yields $F_2$, and $xy \neq 0$ yields $F_1$.

\item{Type 4:} This algebra has 2 distinct weights.  %and decomposes as a sum of two 2-dimensional algebras. 
It consists of matrices of the form 
$$\mathfrak{i}^{\alpha, \gamma}=
 \left( \begin{array}{cccc}
a & b & 0&0 \\
0 & a & 0 &0  \\
0 & 0 & -a &c\\
0&0&0&-a \end{array} \right), $$
and $ \textrm{exp}(\mathfrak{i}^{\alpha, \gamma})= F_0$. 

\item{Type 6:}  Let $[x: y : z ]  \in \R P^2$ be fixed, with $x, z \neq 0$, and consider the algebra consisting of matrices of the form
$$\mathfrak{i}_{[x:y:z]}= \left( \begin{array}{cccc}
0 & ax & bx&c\\
0 & 0 & ay &bz  \\
0 & 0 & 0 &az\\
0&0&0&0 \end{array} \right). $$
% By {{z^2/(x y), 1, 0, 0}, {0, z/y, 0, 0}, {0, 0, 1, -(y/z)}, {0, 0, 0, 1}} and reparametrize. 
When $ y \neq 0 $, the group  $\textrm{exp}(\mathfrak{i}_{[x:y:z]})$ is conjugate to $N_1$, and $\textrm{exp}(\mathfrak{i}_{[x:0:z]})$ is conjugate to $N_4$ by a diagonal matrix. 

\item{Type 7:} Let $(y,t) \in \R ^2$ be fixed, and consider the algebra consisting of matrices of the form 
$$ \mathfrak{i}_{\alpha, y, t}= \left( \begin{array}{cccc}
0 & a & b&c\\
0 & 0 & ay &at  \\
0 & 0 & 0 &0\\
0&0&0&0 \end{array} \right). $$
% by  {{1, 0, 0, 0}, {0, 1, 0, 0}, {0, 0, y, t}, {0, 0, 0, 1}}
If $(y,t) \neq (0,0)$, the group $\textrm{exp}(\mathfrak{i}_{\alpha, y, t})$ is conjugate to $N_2$ by an elementary matrix, and $\textrm{exp}(\mathfrak{i}_{\alpha, 0, 0})= N_8$. 

\item{Type  8:} Let $(y,t) \in \R ^2$ be fixed, and consider the algebra consisting of matrices of the form
$$ \mathfrak{i}_{\gamma, y, t}= \left( \begin{array}{cccc}
0 & 0 & at&c\\
0 & 0 & ay &b  \\
0 & 0 & 0 &a\\
0&0&0&0 \end{array} \right). $$
%If $(y,t) \neq (0,0)$,  the exponentiated group is conjugate to $N_3$.  If $(y,t)=(0,0)$, this group is conjugate to $N_7$.  
If $(y,t) \neq (0,0)$, the group $\textrm{exp}(\mathfrak{i}_{\gamma, y, t})$ is conjugate to $N_3$ by an elementary matrix, and $\textrm{exp}(\mathfrak{i}_{\gamma, 0, 0})= N_7$. 

\item{Type 9:} Let $[ x:y:z:t] \in \R P^3 $ be fixed, and consider the algebra consisting of matrices of the form
$$ \mathfrak{i}_{[x:y:z:t]}= \Bigg\{ \left( \begin{array}{cccc}
0 & 0 & b&c\\
0 & 0 & a &d  \\
0 & 0 & 0 &0\\
0&0&0&0 \end{array} \right) \Bigg| ax + by + cz + dt =0 \Bigg\}.$$
% set d= ax +by + cz  Then conjugate by block matrix, and use solve always
%Then $\textrm{exp}(\mathfrak{i}_{[x:y:z:t]})$ is conjugate to  $N_5$ for $y,t \neq 0$, and  $\textrm{exp}(\mathfrak{i}_{[x:0:z:0]})= N_6$. 
By conjugating by an elementary matrix, it is easy to check $\textrm{exp}(\mathfrak{i}_{[x:y:z:t]})$ is conjugate to $N_6$ if $[x:y:z:t] \in \{ [0:0:1:t], [1:y:0:0], [1:0:z:0],[0:1:0:t] \}$.  For example, $\textrm{exp}(\mathfrak{i}_{[0:0:1:t]})$ is conjugate to $N_6$ by $ -t \cdot E_{14}$. 
Otherwise, $\textrm{exp}(\mathfrak{i}_{[x:y:z:t]})$ is conjugate to $N_5 $. 

\item{Type 10:} Let $(x,y) \in \R ^2 $ be fixed, and consider the algebra consisting of matrices of the form 
$$\mathfrak{i}_{x,y}= \left( \begin{array}{cccc}
0 & a& by&c\\
0 & 0 & 0 &ax  \\
0 & 0 & 0 &b\\
0&0&0&0 \end{array} \right).$$
% Look at the quadratic form in the upper right corner of the exponential.  If isotropic, N_4 if anisotropic N_4'. 
 If $sign(x)=sign(y)$ then $\textrm{exp}(\mathfrak{i}_{x,y})$ is conjugate to $N_4$, and if $sign(x)=-sign(y)$, then $\textrm{exp}(\mathfrak{i}_{x,y})$ is conjugate to $N'_4$.  %If $x=0$ or $y=0$, the group is conjugate to $N_2$ or $N_3$.  If $(x,y)=(0,0)$,  the group is conjugate to $N_6$. 
 Finally,  $\textrm{exp}(\mathfrak{i}_{0,0})= N_6$, $\textrm{exp}(\mathfrak{i}_{x,0})= N_2$ for $x \neq 0$, and $\textrm{exp}(\mathfrak{i}_{0,y})= N_3$ for $y \neq 0$. 
\end{description}
Thus every one of Haettel's algebras exponentiates to be in our list. \\
%is the image under the exponential map of one of Haettel's algebras. \\
\noindent \textbf{Step 2:} Show none of the groups are conjugate  
by computing orbit closures.  Except in the case of $N_4$ and $N_4'$
 we prove  these 15 groups are in distinct conjugacy classes by showing they have orbit closures which are not projectively equivalent (\cite{LeitnerSLn}, Lemma 5).  Every orbit closure is a projective subspace of some dimension from $0$ to $3$. 
 %For example, $n_2 \in N_2$ acts on $\R P^3$ by 
%$$n_2 . [ x_1 : x_2 : x_3:1] = [x_1+ a x_2 + b x_3 + c: x_2 + a x_3 : x_3:1].$$
%  Thus the orbit closures of $N_2$ are a fixed point, together with a plane of lines through the fixed point, with all other orbit closures 2 dimensional.  The collection of orbit closures of $N_2$ is different from the orbit closures of (for example) $N_5$, which consists of a single fixed line, with all other orbit closures 2 dimensional.  

Let $\{e_1, ... e_4\}$ be the usual basis for $\R ^4$, and let $\{[e_1], ... [e_4]\}$ be the projective images in $\R P^3$. Here is a table of the orbit closures of each group.  If a subspace appears in a column for a certain dimension, this  means that the orbit closure of any point in that subspace has that dimension.  For example $\langle e_1, e_2 \rangle $ in the dimension 0 column means every point on the line is fixed.   If  ``$\langle e_1, x \rangle$ for $x \in \langle e_1, e_2, e_3 \rangle$,'' appears in the dimension 1 column, it means every line through $[e_1]$ contained in the plane $\langle e_1, e_2, e_3 \rangle$ is an orbit closure. 
$$\begin{array}{|c||c|c|c|c|}
\hline 
\textrm{Group} & \dim 0 & \dim 1 & \dim 2 & \dim 3 \\
\hline
\hline 
C 
& \begin{array}{c} [e_1], [e_2], \\
\lbrack e_3 \rbrack, [e_4]  \end{array}
& \begin{array}{c}  \langle e_1, e_2 \rangle ,  \langle e_1, e_3 \rangle,  \langle e_2, e_3 \rangle, \\
 \langle e_1, e_4 \rangle, \langle e_2, e_4 \rangle, \langle e_3, e_4 \rangle 
 \end{array} 
 &\begin{array}{c} \langle e_1, e_2 , e_3 \rangle,  \langle e_1, e_2 , e_4 \rangle,\\
  \langle e_1, e_3 , e_4 \rangle, \langle e_2, e_3 , e_4 \rangle 
  \end{array} & \R P^3 \\
\hline 
E_1 
& \begin{array} {c} [e_1], [e_2], \\
\lbrack e_4 \rbrack \end{array}
& \begin{array}{c}  \langle e_1, e_2 \rangle, \langle e_2, e_3 \rangle, \\
\langle e_1, e_4 \rangle, \langle e_2 , e_4 \rangle
\end{array} & 
\begin{array}{c}  \langle e_1, e_2, e_3 \rangle , \langle e_1, e_2, e_4 \rangle, \\
\langle e_2, e_3, e_4 \rangle 
\end{array} & \R P^3 \\
\hline 
F_0 & [e_1], [e_3] & \langle e_1 , e_3 \rangle, \langle e_1, e_2 \rangle , \langle e_3, e_4 \rangle & \langle e_1, e_2, e_3 \rangle , \langle e_1, e_3 , e_4 \rangle  & \R P^3 \\
\hline 
F_1 & [e_1], [e_4] & \langle e_1, e_4 \rangle, \langle e_1, e_2 \rangle & \langle e_1, e_2, e_3 \rangle, \langle e_1, e_2 , e_4 \rangle & \R P^3 \\
\hline 
F_2 & [e_1] , [e_4] &\begin{array}{c} \langle e_1, e _4 \rangle ,\\
 \langle e_1, x \rangle \textrm{ for } x \in \langle e_1, e_2, e_3 \rangle \end{array} 
 & \langle e_1, x , e_4 \rangle \textrm{ any } x \in \R P^3 &  \\
 \hline 
 F_3 & [e_4], \langle e_1, e_2 \rangle & \langle e_1, e_4 \rangle & \langle e_1, e_2 , e_3 \rangle, \langle e_1, e_2 , e_4 \rangle & \R P^3 \\
 \hline 
 N_1 & [e_1] & \langle e_1, e_2 \rangle & \langle e_1, e_2, e_3 \rangle & \R P^3 \\
 \hline 
 N_2 & [e_1] & \langle e_1, x \rangle \textrm{ for } x \in \langle e_1, e_2, e_3 \rangle & \langle e_1, e_2 , x \rangle \textrm{ any } x \in \R P^3 & \\
 \hline 
 N_3 & \langle e_1, e_2 \rangle & \langle e_2, x \rangle \textrm{ for } x \in \langle e_1, e_2, e_3 \rangle &  & \R P^3 \\
 \hline 
 N_4  & [e_1] & \langle e_1, x \rangle \textrm{ for } x \in \langle e_1, e_2 , e_3 \rangle & & \R P^3 \\
 \hline 
  N_4 ' & [e_1] & \langle e_1, x \rangle \textrm{ for } x \in \langle e_1, e_2 , e_3 \rangle & & \R P^3 \\
 \hline 
 N_5 & \langle e_1, e_2 \rangle & & \langle e_1, e_2, x \rangle \textrm{ any } x \in \R P^3 &  \\
 \hline 
 N_6 & \langle e_1 , e_3 \rangle & \langle e_1, x \rangle \textrm{ for } x \in \langle e_1, e_2 , e_3 \rangle & \langle e_1, e_3 , x \rangle \textrm{ any } x \in \R P^3 & \\
 \hline 
 N_7 & \langle e_1, e_2, e_3 \rangle & && \R P^3 \\
 \hline 
 N_8 & [e_1] & \langle e_1 , x \rangle \textrm{ any } x \in \R P^3  & & \\
 \hline 
\end{array}
$$

None of the orbit closures of the groups in the list are projectively equivalent, except $N_4$ and $N_4'$, which are not conjugate by Lemma \ref{N4}. Thus these are all conjugacy classes of subgroups of $SL_4 (\R)$ isomorphic to $\R ^3$. 
\end{proof} 

\begin{lem}\label{N4} The groups $N_4$ and $N'_4$ are not conjugate in $PGL_4 (\R)$, but they are conjugate in $PGL_4(\C)$.  
\end{lem}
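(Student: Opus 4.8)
The plan is to pass from the groups to their Lie algebras and to extract, directly from the ambient matrix multiplication, a canonical quadratic form whose real isomorphism type separates $N_4$ from $N'_4$ but becomes trivializable over $\C$. Since $N_4$ and $N'_4$ are unipotent, a $PGL_4$-conjugacy between them is induced by a matrix $P$ (in $GL_4(\R)$ or $GL_4(\C)$) with $PN_4P^{-1}=N'_4$, and this holds iff $P\mathfrak{n}_4 P^{-1}=\mathfrak{n}_4'$, where $\mathfrak{n}_4=\log N_4$ and $\mathfrak{n}_4'=\log N'_4$. Write $U=E_{12}+E_{34}$, $V=E_{13}+E_{24}$, $W=E_{14}$ for a basis of $\mathfrak{n}_4$, and $U'=E_{12}+E_{24}$, $V'=E_{13}+E_{34}$, $W'=E_{14}$ for $\mathfrak{n}_4'$. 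The structural point is that the linear span $\mathfrak{n}^2$ of all matrix products $XY$ with $X,Y\in\mathfrak{n}$ is intrinsic, since conjugation carries $\mathfrak{n}^2$ to $(P\mathfrak{n}P^{-1})^2$; for both algebras $\mathfrak{n}^2=\langle E_{14}\rangle$, so the squaring map $q(X)=X^2$ is a well-defined quadratic form valued in this canonical line.

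A short computation gives $U^2=V^2=0$ and $UV+VU=2W$, hence $(\alpha U+\beta V+\gamma W)^2=2\alpha\beta\,W$ on $\mathfrak{n}_4$, whereas $U'^2=V'^2=W'$ and $U'V'+V'U'=0$ give $(\alpha U'+\beta V'+\gamma W')^2=(\alpha^2+\beta^2)\,W'$ on $\mathfrak{n}_4'$. Thus in coordinates $q$ is the rank-$2$ form $2\alpha\beta$ of signature $(1,1)$ on $\mathfrak{n}_4$, and the form $\alpha^2+\beta^2$ of signature $(2,0)$ on $\mathfrak{n}_4'$. If $P\mathfrak{n}_4 P^{-1}=\mathfrak{n}_4'$, then $X\mapsto PXP^{-1}$ is a linear isomorphism intertwining the two squaring maps and sending $\langle E_{14}\rangle$ to itself by some nonzero scalar $\lambda$; identifying both target lines with $\R$ via the $E_{14}$-coordinate, this makes the two real forms congruent up to the scalar $\lambda$. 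Over $\R$, scaling and congruence can change a signature $(p,q)$ only to $(p,q)$ or $(q,p)$, so $(1,1)$ can never be matched with $(2,0)$; this rules out any real conjugacy.

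For the complex statement I would exhibit $P$ explicitly. Taking $P$ block diagonal with diagonal blocks $1,M,1$, so that $M\in GL_2$ acts on $\langle e_2,e_3\rangle$, the same computation shows $P\mathfrak{n}_4 P^{-1}=\mathfrak{n}_4'$ precisely when $M^{\mathsf T}M=\left(\begin{smallmatrix}0&1\\1&0\end{smallmatrix}\right)$. No real $M$ solves this, the left side being positive semidefinite and the right side indefinite (the same obstruction as above), but over $\C$ one may take $M=\left(\begin{smallmatrix}1&1/2\\ i&-i/2\end{smallmatrix}\right)$, which is invertible and satisfies $M^{\mathsf T}M=\left(\begin{smallmatrix}0&1\\1&0\end{smallmatrix}\right)$; this completes the conjugacy in $PGL_4(\C)$.

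The step I expect to be the main obstacle is making the invariance of $q$ airtight: one must confirm that $\mathfrak{n}^2=\langle E_{14}\rangle$ really is canonical, so that every conjugating map preserves the target line, and then carefully track the scalar ambiguity $\lambda$ in order to conclude that it is only the sign-class of the signature that need be preserved under conjugation. This is exactly what still separates $(1,1)$ from $(2,0)$; everything else reduces to the direct matrix computations indicated above.
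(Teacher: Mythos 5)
Your proof is correct, and its core is the same invariant the paper uses: both arguments come down to the fact that the quadratic form hiding in the top-right corner is hyperbolic for $\mathfrak{n}_4$ (your $2\alpha\beta$, the paper's $ab$) but definite for $\mathfrak{n}_4'$ (your $\alpha^2+\beta^2$), a distinction that is real-specific and disappears over $\C$. The differences are in packaging and completeness, and they favor your write-up. The paper phrases the real obstruction as the existence of a $2$-dimensional subalgebra that ``exponentiates linearly'' --- which is exactly a plane on which your form $q$ vanishes identically, since $\exp(X)=I+X+\tfrac12X^2$ here --- and it leaves both the conjugation-invariance of that property and the complex conjugacy essentially as assertions: over $\C$ it only exhibits the isotropic plane $a=ib$ and declares the groups conjugate. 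You instead nail down the invariance by noting that $\mathfrak{n}^2=\langle E_{14}\rangle$ is canonically attached to the subalgebra, so any conjugation intertwines the squaring forms up to a nonzero scalar, and signature $(1,1)$ can never meet $(2,0)$ under scaling and congruence; and in the complex direction you go beyond the paper by producing an explicit conjugator $P=\mathrm{diag}(1,M,1)$ with $M^{\mathsf T}M=\left(\begin{smallmatrix}0&1\\1&0\end{smallmatrix}\right)$. I checked your computations: the bases and products for both algebras are right, the block-conjugation criterion $M^{\mathsf T}M=\left(\begin{smallmatrix}0&1\\1&0\end{smallmatrix}\right)$ is exactly what membership in $\mathfrak{n}_4'$ requires, and your $M$ satisfies it with $\det M=-i\neq 0$. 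In short: same idea, but your version converts the paper's sketch into a complete proof, at the modest cost of some extra formalism around the canonical line $\mathfrak{n}^2$.
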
 
\begin{proof}  Consider the respective Lie algebras: 
$$ \mathfrak{N_4} = 
\left( \begin{array}{cccc}
0 & a& b&c\\
0 & 0 & 0 &b  \\
0 & 0 & 0 &a\\
0&0&0&0 \end{array} \right)
\textrm{  and  } 
\mathfrak{N'_4} = 
\left( \begin{array}{cccc}
0 & a& b&c\\
0 & 0 & 0 &a  \\
0 & 0 & 0 &b\\
0&0&0&0 \end{array} \right).
$$
To show $\mathfrak{N}_4$ and $\mathfrak{N}'_4$  are non-isomorphic Lie algebras, consider their images under the exponential map. Notice $\textrm{exp}(\mathfrak{N_4})$ has $ c+ \frac{ab}{2}$ in the upper right corner, and $\textrm{exp}(\mathfrak{N'_4})$ has $c + \frac{a^2+b^2}{2}$ in the upper right. The subalgebra of $\mathfrak{N_4}$ with $a=0$ is a 2-dimensional subalgebra which exponentiates linearly, i.e., every matrix entry is linear in the image under the exponential map.   There is no 2-dimensional Lie subalgebra of $\mathfrak{N'_4}$ which exponentiates linearly (since $a^2 + b^2$ is positive definite as a real quadratic form). 

  The groups $N_4$ and $N'_4$ are conjugate by a complex matrix, but \emph{not} a real matrix.   Over $\C$, the algebra $\mathfrak{N'_4}$ has a 2-dimensional subalgebra which exponentiates linearly, when $a = i b$. 
 \end{proof} 
 
  Iliev and Manivel prove there are 14 conjugacy classes of 3-dimensional abelian subalgebras in $\mathfrak{sl}_4(\C)$, see \cite{IM} section 3.1.  Their list is the same as in Theorem \ref{all15}, with only one representative for the conjugacy class $\{\mathfrak{N}_4, \mathfrak{N}'_4\}$ over $\C$.  %This classification also follows from work of Suprenko and Tyshkevitch over $\C$, see \cite{SupTysh}, p. 134. 

\section{Description of cusp Lie subgroups of $E_1$}%%%%%%%%%%%%%%%%%%%%%%%%%

This section determines which 2-dimensional subgroups of the groups in Theorem \ref{all15} have a strictly convex orbit.  We do the case of $E_1$ in detail:  we first produce a 2 parameter family, $E(r,s)$, of cusp Lie groups, and then Proposition \ref{conjsubset} shows they are all conjugate to the groups $E(s)$ in Theorem \ref{4cusps}.

%Proposition \ref{4groups} concludes the proof of theorem \ref{4cusps}.  
%Before continuing the proof of theorem \ref{4cusps}, we will describe the family of cusp Lie groups $E(r,s)$ in detail.  Proposition \ref{conjsubset} shows every cusp Lie group $E(r,s)$ is conjugate to $E(s)$ with $0 < s < 1/2$. 
Ballas describes the cusps  arising from $N$ (the standard cusp), and $F$ in \cite{Ballas}.  Gye-Seon Lee has described the family of cusps  arising from $C([r:s:t])$.  %It remains to describe the family of cusps $\widehat{E_1}(r,s)$. 
We follow the notation and ideas outlined in \cite{Ballas}.   
 
 Recall the \emph{second fundamental form} is a symmetric bilinear form on the tangent plane of a smooth surface in three-dimensional Euclidean space (see \cite{Morgan}).   It is given explicitly for the image of a twice continuously differentiable function  $f: \R^2 \to \R^3$ 
which is tangent to the $xy$ plane at the origin by
 $$\mathrm{I\!I}(f)= \frac{\partial ^2 f}{\partial x^2 } d x^2 + 2 \frac{\partial ^2 f}{\partial x \partial y } dx dy +  \frac{\partial ^2 f}{\partial y^2 } d y^2. $$
This gives the curvature of $f$ at the origin.  

The \emph{Gauss curvature}, $G$,  is the determinant of $\mathrm{I\!I}(f)$ (see \cite{Morgan} p.13).  Let $p$ be a point on the surface $S \subset \R ^3$, which is the image of a twice differentiable function $f : \R ^2 \to \R ^3$.  Proposition 3.5 in \cite{Morgan} says the second fundamental form at $p$, written $\mathrm{I\!I}(f)_p$, is similar to 
$$   g^{-1} \left[ \begin{array}{cc} 
\frac{\partial ^2 f } { \partial x^2 } \cdot \vec{n}   &  \frac{\partial ^2 f } { \partial x \partial y } \cdot \vec{n} \\
 \frac{\partial ^2 f } { \partial x \partial y } \cdot \vec{n}  & \frac{\partial ^2 f } { \partial y^2 } \cdot \vec{n}  \end{array} \right] , $$
 where $\vec{n}$ is the normal vector to $S$ at $p$, and $g$ is the metric. 
Proposition 3.5 in \cite{Morgan} also implies the sign of the curvature at $p$ depends only on the sign of $\det \mathrm{I\!I}(f)_p$. 
Therefore if $\det \mathrm{I\!I}(f)_p$ is positive, then $S$ is convex at $p$. 

Recall a surface is \emph{convex at a point} if it lies completely on one side of the tangent plane at that point. A surface is \emph{convex everywhere} if it there is a unique supporting hyperplane at every point, and at each point, the surface lies completely on one side of that hyperplane. 
Suppose there is a transitive affine group action on $S\subset {\mathbb R}^3$.  
Since affine maps preserve convexity, $S$ is convex everywhere if there is one point at which $S$ is convex.  
%Thus, if $S$ is the orbit of an affine group then $S$ is convex everywhere iff
%it is convex at some point. 

\begin{prop}\label{fund_form}  Suppose a surface, $S$, is the image of $f:{\R^2}\longrightarrow\R^3$
and $S$ is the orbit of a point under the action of an affine group, $G$.  
\begin{enumerate}
\item If there exists $p\in S$ with $\det \mathrm{I\!I}(f)_p>0$, then $S$ is convex everywhere. 
\item  If $S$ is convex everywhere, then $\mathrm{I\!I}(f)$ is positive definite. 
\end{enumerate}
\end{prop}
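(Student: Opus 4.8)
The plan is to handle the two parts separately, since Part 1 is essentially immediate from the discussion preceding the proposition while Part 2 needs one genuinely new ingredient. For Part 1, I would first note that the hypothesis $\det \mathrm{I\!I}(f)_p > 0$ already gives, via Morgan's Proposition 3.5 and the remarks just above it, that the Gauss curvature at $p$ is positive, so that $S$ is convex at $p$ in the sense that it lies on one side of its tangent plane there. Since $S$ is a single orbit, $G$ acts transitively on $S$, and every $g \in G$ is affine; affine maps carry tangent planes to tangent planes and preserve the relation ``lies on one side of a plane,'' so convexity at $p$ is inherited at $g\cdot p$ for each $g$. Because the orbit of $p$ is all of $S$, this shows $S$ is convex at every point, which is the definition of convex everywhere. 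This is exactly the principle recorded just before the proposition, so Part 1 is little more than bookkeeping.

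For Part 2, I would first extract positive semidefiniteness and only then upgrade to definiteness. If $S$ is convex everywhere, then at each $p$ the surface lies on one side of its tangent plane; writing $S$ locally as the graph of the height function over that tangent plane, the height function attains a local extremum at $p$, so its Hessian is positive semidefinite for a suitable orientation of the normal $\vec{n}$. By the matrix description in Morgan's Proposition 3.5, $\mathrm{I\!I}(f)_p$ is similar to $g^{-1}$ times this Hessian, and since $g^{-1}$ is positive definite it follows that $\mathrm{I\!I}(f)_p$ is positive semidefinite for every $p$. In particular no point is a saddle ($\det \mathrm{I\!I}(f)_p < 0$ cannot occur) and $\det \mathrm{I\!I}(f)_p \geq 0$ throughout.

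The remaining task is to exclude the degenerate case $\det \mathrm{I\!I}(f)_p = 0$, and this is where the real work lies. The tool is homogeneity: because $G$ acts transitively on $S$ by affine maps, and because the \emph{type} of a point --- elliptic, hyperbolic, or parabolic, equivalently the rank and signature of the second fundamental form --- is invariant under affine maps, all points of $S$ have the same type. Convexity has already ruled out the hyperbolic type, so either $\det \mathrm{I\!I}(f) > 0$ everywhere (positive definite, as desired) or $\det \mathrm{I\!I}(f) \equiv 0$ on all of $S$. In the latter case $S$ has identically vanishing Gauss curvature $K \equiv 0$, hence is developable and contains a straight line segment through each point; this contradicts $S$ being a strictly convex orbit, so it cannot occur, and therefore $\mathrm{I\!I}(f)$ is positive definite.

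The hard part will be precisely this last step, because semidefiniteness by itself does not give definiteness: a round cylinder is convex everywhere yet parabolic, so the argument must genuinely use that $S$ is a homogeneous orbit and is strictly convex (equivalently, that the orbit is diffeomorphic to $\R^2$ rather than $S^1 \times \R$). The two facts I would need to pin down carefully are (i) the affine invariance of the point type, so that degeneracy at one point forces $\det \mathrm{I\!I}(f)\equiv 0$ everywhere, and (ii) the passage from $K \equiv 0$ to the presence of line segments, which then contradicts strict convexity. Both are standard, but together they are the crux of Part 2; everything else reduces to Morgan's Proposition 3.5 and the transitivity of the $G$-action.
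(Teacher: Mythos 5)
Your proof is correct and follows essentially the same route as the paper's: Part 1 is exactly the affine-invariance-of-convexity argument from the discussion preceding the proposition, and Part 2 is the same homogeneity argument --- affine invariance of the (sign of the) determinant of $\mathrm{I\!I}$ forces degeneracy at one point to propagate to all of $S$, whence zero Gauss curvature, a straight line in $S$, and a contradiction with the unique-supporting-hyperplane condition. The only difference is that you make explicit two steps the paper glosses over (deriving positive \emph{semi}definiteness from convexity before excluding the degenerate case, so that nonvanishing of $\det \mathrm{I\!I}$ really yields positive definiteness, and flagging the cylinder as the reason homogeneity plus strictness is genuinely needed), which tightens rather than changes the argument.
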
 

\begin{proof}  The preceding discussion proves the first assertion.  To prove the second claim, set $k(p) := \mathrm{I\!I}(f)_p$, the numerical value of the second fundamental form at a point.  Let $A \in G$.  Then $A.S = S$.  Since $A$ is affine, $A$ multiplies $k(p)$ by a nonnegative scalar.  So $k (A.p ) =0$ if and only if $k(p)=0$.  Thus if $k(p)=0$ for some $p$, then $k \equiv 0$ everywhere.  But if $ k=0$, then $S$ has zero Gauss curvature, and $S$ contains a straight line.  But this contradicts that $S$ has a unique supporting hyperplane at every point.  Therefore $k(p)$ is positive everywhere. 
\end{proof}

%$$ \sum_i X_i \frac{\partial^2 x_i}{\partial u^2}  d u^2 + 2  \sum_i X_i \frac{\partial^2 x_i}{\partial u \partial v}   du dv  + \sum_i X_i \frac{\partial^2 x_i}{\partial v^2}  dv^2$$

%Recall the \emph{Hessian} is a symmetric square matrix of second-order partial derivatives of a function $f : \R ^n \to \R$.  It describes the local curvature of a function of many variables.   
%$$\bold Hf = \begin{bmatrix}
%  \dfrac{\partial^2 f}{\partial x_1^2} & \dfrac{\partial^2 f}{\partial x_1\,\partial x_2} & \cdots & \dfrac{\partial^2 f}{\partial x_1\,\partial x_n} \\[2.2ex]
%  \dfrac{\partial^2 f}{\partial x_2\,\partial x_1} & \dfrac{\partial^2 f}{\partial x_2^2} & \cdots & \dfrac{\partial^2 f}{\partial x_2\,\partial x_n} \\[2.2ex]
%  \vdots & \vdots & \ddots & \vdots \\[2.2ex]
%  \dfrac{\partial^2 f}{\partial x_n\,\partial x_1} & \dfrac{\partial^2 f}{\partial x_n\,\partial x_2} & \cdots & \dfrac{\partial^2 f}{\partial x_n^2}
%\end{bmatrix}.$$
%If $\bold Hf$ is positive definite, then $f$ is convex. 

Given $[r:s] \in \R P^1$, define 
$$E(r,s) := \left \{ \left( \begin{array}{cccc}
e^{a-b} & 0 & 0&0\\
0 & e^b & e^b(ar +bs) &0  \\
0 & 0 & e^b &0\\
0&0&0&e^{-b-a} \end{array} \right) : a,b \in \R  \right\} .$$
Then $E(r,s) \cong (\R ^2, +)$ is a subgroup of $E_1$.

\begin{prop}\label{E1}  %The group $E_1$ has a subgroup $E(r,s) \cong (\R^2,+)$ with convex orbit if  %$ r >0,  s>3r$, or $r<0 ,  3r<s<-r $.
%either $r>0, \frac{-r}{2} < s < \frac{r}{2}$ or $r<0, \frac{-r}{2} < s< \frac{r}{2}$. 
%$s < \frac{|r|}{2}$.
The cusp Lie groups contained in $E_1$ are the subgroups $E(r,s)$ with $|s| < |r|/2$. 
\end{prop}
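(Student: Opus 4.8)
The plan is to exploit that $E_1$ is abelian and isomorphic to $(\R^3,+)$, so that a two-dimensional subgroup is exactly $\exp(\mathfrak h)$ for a two-dimensional linear subspace $\mathfrak h$ of its Lie algebra $\mathfrak{i}_\beta$. I would record a general element of $\mathfrak{i}_\beta$ as a triple $(a,b,c)$ listing the $(1,1)$-entry, the common $(2,2)=(3,3)$-entry, and the $(2,3)$-entry $c$ (the coefficient of $E_{23}$), and consider the projection $\pi\colon\mathfrak{i}_\beta\to\R^2$, $(a,b,c)\mapsto(a,b)$, whose kernel is the nilpotent line $\R E_{23}$.

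First I would dispose of the subalgebras on which $\pi$ is not injective, i.e. those containing $E_{23}$. If $E_{23}\in\mathfrak h$ then $H=\exp(\mathfrak h)$ contains the one-parameter shear $\{I+tE_{23}\}$; for any $x=[x_1:x_2:x_3:x_4]$ with $x_3\neq0$ this moves $x$ along the projective line $[x_1:x_2+tx_3:x_3:x_4]$, so $H\cdot x$ contains a segment and is not strictly convex, while if $x_3=0$ then $H\cdot x$ lies in the hyperplane $\{x_3=0\}$ (which $E_1$ preserves) and is planar. Hence a cusp Lie subgroup has $\pi|_{\mathfrak h}$ an isomorphism, so $\mathfrak h$ is the graph $c=\lambda a+\mu b$ of a linear form. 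The graph $c\equiv0$ is the diagonal subgroup, whose orbits lie in a plane $\{x_2/x_3=\mathrm{const}\}$ and so are not strictly convex; every nonzero graph is the Lie algebra $\mathfrak e(r,s)$ of some $E(r,s)$. This reduces the proposition to deciding for which $(r,s)$ the group $E(r,s)$ admits a strictly convex orbit.

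For that step I would fix a basepoint with all relevant coordinates nonzero, say $p=[1:0:1:1]$, and write the orbit $E(r,s)\cdot p$ as a surface $f\colon\R^2\to\R^3$ in the affine chart $x_3=1$, parametrized by the group coordinates $(a,b)$. Here $f$ has two exponential coordinates coming from the diagonal weights and one affine coordinate $ar+bs$ coming from the off-diagonal entry. I would then compute the three components $L,M,N$ of the unnormalized second fundamental form $\mathrm{I\!I}(f)$ by taking second partials of $f$ and pairing with the normal $f_a\times f_b$. The outcome is that $\det\mathrm{I\!I}(f)_p$ is a strictly positive multiple of the quadratic form $r^2-4s^2$. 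By Proposition \ref{fund_form}(1) the orbit is strictly convex once this determinant is positive at one point, i.e. when $r^2-4s^2>0$, which is exactly $|s|<|r|/2$; by Proposition \ref{fund_form}(2) strict convexity forces $\mathrm{I\!I}$ to be definite, so on the boundary $|s|=|r|/2$ (where $\det\mathrm{I\!I}=0$, a parabolic point) and beyond it the orbit fails to be strictly convex. Combining the two steps yields the stated description.

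I expect the second-fundamental-form computation to be the main obstacle: the normal vector and the three pairings must be pushed through carefully to extract the clean factor $r^2-4s^2$ and to confirm its sign. Two secondary points also need attention. First, the conclusion should not depend on the basepoint; I would note that for any $p$ with $x_1,x_3,x_4\neq0$ the same computation gives $\det\mathrm{I\!I}$ equal to $r^2-4s^2$ times a positive factor, whereas basepoints with a vanishing coordinate produce orbits inside a hyperplane and hence never strictly convex. Second, one must confirm that each $E(r,s)$ with $|s|<|r|/2$ genuinely has a two-dimensional orbit, so that $H\cong\R^2$ acts with a surface orbit; this holds at the generic basepoint above, since the diagonal part already acts with a two-dimensional orbit.
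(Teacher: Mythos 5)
Your proposal is correct, and its core is the same as the paper's proof of Proposition \ref{E1}: identify the two-dimensional subalgebras of $\mathrm{Lie}(E_1)$, exponentiate the generic ones to the family $E(r,s)$, realize an orbit as the graph of a function built from logarithms of the exponential coordinates, and decide strict convexity by the sign of $\det \mathrm{I\!I}$, using both directions of Proposition \ref{fund_form}. The one genuine divergence is your treatment of the degenerate subalgebras: in the paper these are the cases $t=0$ of the defining equation $ar+bs+ct=0$, which it dispatches by ``the same computation'' with the equation solved for $b$; you instead observe that such a subalgebra contains $E_{23}$, so the group contains the shear $I+tE_{23}$, whose orbits either contain segments (when $x_3\neq 0$) or lie in the invariant hyperplane $\{x_3=0\}$. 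This is cleaner and computation-free, and together with your explicit check of basepoint-independence (which the paper compresses into the word ``generic''), it tightens two points the paper leaves loose. Your chart $x_3=1$ is also a mild simplification: it makes the middle coordinate affine in $(a,b)$, so the graph function is a constant plus a linear combination of $\ln x_1$ and $\ln x_4$ and the Hessian is diagonal, avoiding the paper's substitution $A=e^{2a}$, $B=e^{a+2b}$ in the chart $x_4=1$.

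One bookkeeping caution, which you share with the source rather than introduce: for the matrices of $E(r,s)$ exactly as displayed (diagonal $e^{a-b},e^b,e^b,e^{-a-b}$ with $(2,3)$-entry $e^b(ar+bs)$), the determinant in your chart works out to a positive multiple of $s^2-4r^2$, not $r^2-4s^2$. Concretely, $E(0,1)$ yields the strictly convex graph $x_2=-\frac{1}{4}\ln(x_1x_4)$, while $E(1,0)$ yields the saddle $x_2=\frac{1}{2}(\ln x_1-\ln x_4)$; consistently, $E(s_0)$ of Theorem \ref{4cusps} sits inside this family as $E(r,s)=E(s_0,1)$ after swapping the roles of $a$ and $b$, and is convex exactly when $|s_0|<1/2$. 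The paper's own displayed graph $g_p$ has the same feature, so the inequality $|s|<|r|/2$ in the statement corresponds to the opposite normalization of the defining linear form. This is a slip inherited from the target statement, not a flaw in your method, but when you actually push the second-fundamental-form computation through (the step you rightly flag as the main obstacle) you should expect the swapped quadratic form and relabel accordingly.
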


\begin{proof} 
Every 2-dimensional Lie subalgebra of $\textrm{Lie} (E_1)$ is defined by an equation $ ar+ bs +ct=0$, where $r, s,t \in \R$ are fixed, and at least one of $r,s,t$ is not zero.  %Since at least one of $r,s,t \neq 0$, %after a coordinate permutation, we may 
Assume first $t \neq 0$, and scale so that $t=-1$, so $c= ar +bs$.   Exponentiating gives the %2-dimensional 
 Lie group $E(r,s)$. 
 % $L_{1}^{r,s} \cong (\R ^2, +)$ consisting of matrices of the form:  
%$$\left( \begin{array}{cccc}
%e^{a-b} & 0 & 0&0\\
%0 & e^b & e^b(ar +bs) &0  \\
%0 & 0 & e^b &0\\
%0&0&0&e^{-b-a} \end{array} \right).$$
%The elements of $L_{1,(r,s)}$ preserve points in the projective hyperplane $[\ast:\ast:\ast:1]$.   Let $D_1 = \{(x_1, x_2 ,x_3 ) \in \R ^3 : x_2 > x_3 \}. $ Then the action of $L_{1,(r,s)}$ on $\R ^3$ preserves $D_1$. 

Let $p=[x_0:y_0:z_0:1] \in \R P^3$.  The orbit of $p$ under $E(r,s)$ is the surface
$$S : = \{[e^{a-b} x_0: e^b y_0 + e^b(ar+bs)z_0: e^b z_0: e^{-a-b}] : a,b \in \R \}.$$
Scale by $e^{-a-b}$ so 
$$S=\{ [\frac{e^{a-b} x_0}{e^{-a-b}}:  \frac{e^b y_0 + e^b(ar+bs)z_0}{e^{-a-b}}: \frac{e^b z_0}{e^{-a-b}}:1] : a,b \in \R\},$$
and $S$ is in the affine patch that is the complement of the hyperplane $[ \ast:\ast:\ast:0]$. Moreover, $E(r,s)$ preserves the 3-dimensional affine subspace $\{ [x_1 : x_2 : x_3 :1] \}$, so view $E(r,s)$ as affine transformations on $\R ^3$.     Consider the map $f_p: \R^2 \to \R ^3$ given by  
$$f_p(a,b) =(e^{2a} ,  e^{a+2b}  + e^{a+2b}(ar+bs), e^{a+2b} ).$$   
Then $S$ is the image of $f_p$. 
Perform the coordinate change %$ A = e^{2a+2b}, B= e^{a+3b}$.  
$A=e^{2a}, B=e^{a+2b}$. So
%$$\hat{f}_1=  \{(A, B(1+  r (\frac{3}{4}\ln A - \frac{1}{2} \ln B)  + s(\frac{-1}{4}\ln A + \frac{1}{2} \ln B)  ), B )| A, B \in \R_{>0}\}.$$ 
$$S=  \{(A, B(1+  r (\frac{1}{2}\ln A)  + s(\frac{-1}{4}\ln A + \frac{1}{2} \ln B)  ), B )| A, B \in \R_{>0}\}.$$ 
Then $S$ is the graph of $g_p(A,B) = B(1+  r (\frac{1}{2}\ln A)  + s(\frac{-1}{4}\ln A + \frac{1}{2} \ln B)  ) \subset \R ^3$. 

The determinant of
the second fundamental form is 
% the Hessian of $f$ is %Hessian of $\hat{f}^{r,s}$ is %$16 (-3 r^2 - 2 rs+ s^2)= -16(3r-s)(r+s)$, 
$\det \mathrm{I\!I}(g)_p =\frac{ (r^2 - 4s^2)}{16 A^2}$, 
which is positive when %$ r >0,  s>3r$, or $r<0 ,  3r<s<-r $.  
%$r>0, \frac{-r}{2} < s < \frac{r}{2}$ or $r<0, \frac{-r}{2} < s< \frac{r}{2}$. 
$| s| <| r|/2$. 
So by Proposition \ref{fund_form}, ${E}(r,s)$ has a convex orbit if and only if $|s| < |r|/2$.   Doing the same computation with $r$ (or $s \neq 0$) and $b= ar + ct$ yields analogous results.  Permuting coordinates gives equivalence of the fundamental forms. 
\end{proof} 

Recall the upper half space model of hyperbolic space gives a coordinate system with a  \emph{point at infinity}.   A generalization of these coordinates for properly convex domains is introduced in \cite{CoopLongTil1}.  Let $\Omega$ be a properly convex domain, $p$ a point in $\partial \Omega$, and $H$ a supporting hyperplane containing $p$.  There is an identification of the affine patch $\R P^n - H$  with $\R^n$  in which lines through $p$ not contained in $H$ are parallel to the $x_1$ axis.   This is achieved by applying a projective change of coordinates which sends $p$ to $[e_1]$ and $H$ to the projective hyperplane dual to $[e_{n+1}]$.   The $x_1$ direction is called the \emph{vertical direction}.   A set of coordinates with this property is called \emph{parabolic coordinates centered at $(H,p)$}, or just parabolic coordinates if $H$ and $p$ are clear from the context. % Commonly, we choose $p \in \partial \Omega$ and the plane $H_0 = \{ [ 0 : \ast : \ast : \cdot \cdot \cdot : \ast]$. 

\emph{Algebraic horospheres} are defined using parabolic coordinates as follows:  Let $t>0$, and define $\mathcal{S}_t$ as the translation of the part of $\partial \Omega$ that does not contain any line segments through $p$ by the vector $t e_1$.  These sets are \emph{algebraic horospheres centered at $(p,H)$}.  They coincide with Buseman horospheres at $\mathcal{C}^1$ points, see \cite{Bus}. Algebraic horospheres are homeomorphic to spheres with a point removed from the boundary. We will show cusp Lie groups act on convex sets to preserve a foliation by algebraic horospheres. The rays in $\Omega$ asymptotic to $p$ give a transverse foliation. See \cite{CoopLongTil1} for more on algebraic horospheres.

\begin{prop}  If $|s| < |r|/2$, the cusp Lie group ${E}(r,s)$ acts on a convex set foliated by algebraic horospheres, each of which is a convex surface preserved by the action of ${E}(r,s)$. 
\end{prop}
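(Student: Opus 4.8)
The plan is to exhibit each algebraic horosphere as an \emph{orbit} of $E(r,s)$, so that its invariance under the whole group is automatic and not merely an invariance of the foliation as a whole. The device that makes this work is a one-parameter unipotent group that centralizes $E(r,s)$ and whose orbits are the vertical lines of a system of parabolic coordinates. Concretely, let $T_c := I + cE_{23}$, the projective transformation fixing $[e_1],[e_2],[e_4]$ and sending $e_3 \mapsto ce_2 + e_3$. First I would record the identity $E_{23}M(a,b) = M(a,b)E_{23} = e^{b}E_{23}$ for every $M(a,b) \in E(r,s)$, whence $T_c M(a,b) = M(a,b) T_c$, so $T_c$ centralizes $E(r,s)$. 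Next I would note that $T_c$ fixes $p_\infty := [e_2]$ and that the $T_c$-orbit of a point $q$ is the projective line through $q$ and $p_\infty$ (its tangent direction is $E_{23}q = q_3 e_2$). Since $E(r,s)$ fixes $p_\infty$ and preserves the hyperplane $H = \{x_4 = 0\} \ni p_\infty$, I pass to parabolic coordinates centered at $(H,p_\infty)$ (the standard normalization sending $p_\infty \mapsto [e_1]$ and $H$ to the hyperplane dual to $[e_{n+1}]$); in these coordinates the lines through $p_\infty$ are vertical, so the unipotent group $T_c$ acts as vertical translation along the $x_1$-axis.

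With this in hand, fix a base point $q_0$ (with $|s| < |r|/2$) and let $S_0 = E(r,s)\cdot q_0$ be its orbit. By Proposition \ref{E1}, via Proposition \ref{fund_form}, $S_0$ is a strictly convex surface, so it contains no line segment; hence $S_0$ is exactly the portion of $\partial\Omega$ containing no segment through $p_\infty$, and the algebraic horosphere is the vertical translate $\mathcal{S}_t = T_t(S_0)$ (after reparametrizing the vertical coordinate). The crux is then the single computation
$$M \cdot \mathcal{S}_t = M\,T_t(S_0) = T_t\,M(S_0) = T_t(S_0) = \mathcal{S}_t \qquad (M \in E(r,s)),$$
which uses that $T_t$ commutes with $M$ together with $M(S_0) = S_0$ (as $S_0$ is an $E(r,s)$-orbit). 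Thus each \emph{individual} horosphere $\mathcal{S}_t$ is preserved by the full group $E(r,s)$; indeed $\mathcal{S}_t$ is precisely the orbit $E(r,s)\cdot T_t(q_0)$. This is exactly the requirement in the statement, and it is strictly stronger than saying $E(r,s)$ permutes the leaves.

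It remains to assemble the convex set and its foliation. Each $\mathcal{S}_t = T_t(S_0)$ is convex, being the image of the convex surface $S_0$ under the projective map $T_t$ (projective maps carry supporting hyperplanes to supporting hyperplanes). Writing $S_0$ as the graph $x_2 = g(x_1,x_3)$ of the convex function $g$ produced in the proof of Proposition \ref{E1}, the translates $\mathcal{S}_t$ are the level sets of $(x_2 - g(x_1,x_3))/x_3$; these are pairwise disjoint and sweep out the region $\Omega = \{\,x_1 > 0,\ x_3 > 0,\ x_2 > g(x_1,x_3)\,\}$, which is convex as the intersection of the epigraph of the convex function $g$ with an octant. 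Since $E(r,s)$ preserves each leaf, it preserves $\Omega$, and $\Omega$ is foliated by the algebraic horospheres $\{\mathcal{S}_t\}$, each a convex surface preserved by the action.

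The main obstacle is the bookkeeping that identifies $T_c$ with the vertical translation of parabolic coordinates: one must verify that the projective change of coordinates taking $(H,p_\infty)$ to the standard position really turns the shear $T_c$ on the $\{x_4=1\}$ chart into an honest translation along the $x_1$-axis, so that $\mathcal{S}_t$ as defined in the text coincides with $T_t(S_0)$. A secondary subtlety is confirming that $g$ is genuinely convex and that the relevant portion of $\partial\Omega$ is exactly the strictly convex surface $S_0$; both follow from the positive-definiteness of the second fundamental form established in Proposition \ref{E1}. Once these identifications are in place, the centralizing computation above does all of the real work, and it is that computation — rather than any statement about the family of leaves — that delivers the preservation of each individual horosphere.
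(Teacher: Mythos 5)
Your core strategy is in essence the paper's: the leaves of the foliation are themselves $E(r,s)$-orbits, each a strictly convex graph by Proposition \ref{E1}, sweeping out the convex epigraph $\Omega$. The paper obtains the leaves directly as orbits $\mathcal{H}_k$ of points along a vertical transversal; you generate them as $E(r,s)\cdot T_t(q_0)$ via the commuting flow $T_c = I + cE_{23}$. Your centralizer computation is correct ($E_{23}M(a,b) = M(a,b)E_{23} = e^{b}E_{23}$ holds), and it packages neatly what the paper leaves implicit, namely that the leaves are projectively equivalent translates of one another.

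However, the step you yourself flag as the ``main obstacle'' fails with the supporting hyperplane you chose: with $H = \{x_4 = 0\}$, the shear $T_c$ can \emph{never} be the vertical translation of parabolic coordinates centered at $(H, [e_2])$. Indeed, if $\Psi$ is a projective change of coordinates with $\Psi T_c \Psi^{-1} = I + cE_{14}$, then $\Psi E_{23}\Psi^{-1} = (\Psi e_2)(e_3^{T}\Psi^{-1})$ must be proportional to $e_1 e_4^{T}$, which forces the fourth row of $\Psi$ to be proportional to $e_3^{T}$; hence the hyperplane $\Psi$ sends to infinity is $\{x_3 = 0\}$, contradicting the requirement that $\Psi(H)$ be the hyperplane dual to $[e_4]$ when $H = \{x_4=0\}$. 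Concretely, in parabolic coordinates for $(\{x_4=0\}, [e_2])$ the algebraic horospheres are the honest translates $\{x_2 = g(x_1,x_3) + t\}$, and $E(r,s)$ \emph{permutes} these (an element with parameters $a,b$ sends $\{x_2 - g = t\}$ to $\{x_2 - g = e^{a+2b}t\}$), preserving only the orbit leaves $\{x_2 - g = t\,x_3\}$. The repair is to take $H = \{x_3 = 0\}$ instead, which is also a supporting hyperplane of $\overline{\Omega}$ at $[e_2]$ since $\Omega \subset \{x_3 > 0\}$: in the chart $(x_1/x_3,\, x_2/x_3,\, 1/x_3)$ on $\R P^3 - H$, lines through $[e_2]$ are parallel to the second coordinate axis and $T_c$ acts as honest translation by $c$ along it, so your leaves $T_t(S_0)$ are precisely the algebraic horospheres centered at $([e_2], \{x_3=0\})$. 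Since the proposition does not specify the pair $(p,H)$, this is a one-line fix rather than a failure of the approach; with it, the rest of your argument (leafwise invariance via the centralizer, convexity of each leaf, convexity of $\Omega$ as an epigraph) goes through and recovers the paper's conclusion.
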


\begin{proof} 
Recall from the proof of Proposition \ref{E1} that the graph of $g_p$ gives an orbit of $E_1$ that is convex.  We rewrite this as  
%$$\Omega_1^{r,s} = \{(x_1, x_2, x_3)\in \R ^3 | x_1, x_3 >0, (1+\frac{1}{4}(3r-s) \ln x_1 + \frac{1}{2}(s-r) \ln x_3) > x_2) \}$$
$$\Omega = \{(x_1, x_2, x_3)\in \R ^3 | x_1, x_3 >0, (1+\frac{1}{4}(2r-s) \ln x_1 + \frac{1}{2}s \ln x_3) > x_2) \}$$
 is a convex set preserved by the action of $E(r,s)$.   %Since $L_{1}^{r,s}$ has a three dimensional orbit and centralizes the radial flow $t \cdot E_{23}$, it is a cusp group.
Let $\mathcal{H}_k$ be the orbit of $(0,k,0)$ under $E(r,s)$.   So, $\mathcal{H}_k$ is the graph of the strictly convex function 
%$$(1+\frac{1}{4}(3r-s) + \frac{1}{2}(s-r) ) \ln k =  \frac{1}{4}(4+ r+s)  \ln k.$$
$$h(r,s)=(1+\frac{1}{4}(2r-s) + \frac{1}{2}s ) \ln k =  \frac{1}{4}(4+ 2r+s)  \ln k.$$
Then $ \bigcup_{k>0} \mathcal{H}_k$ is a foliation of $\Omega$ by horospheres around the point $(0,1,0)$. 

Let $\Gamma$ be a lattice in $\Omega$.  Then $\Omega / \Gamma$ is a generalized cusp, diffeomorphic to $T^2 \times [0, \infty)$, by a diffeomorphism which sends $ \mathcal{H}_k/ \Gamma  \to T^2 \times \{k\}$.  The map $[x_1: x_2 : x_3 :1 ] \to (x_1, x_3)$ restricted to $\mathcal{H}_k$  is a developing map  for an affine structure on $\mathcal{H}_k / \Gamma$. 
\end{proof}

 \section{ Convex Orbits }%%%%%%%%%%%%%%%%%%%%%%%%%%
 
 This section determines which of %$C, E_1, F_0, F_1, F_3, N_1, N_3, N_4, N_4'$ and $N_7$ 
the 15 groups in Theorem \ref{all15} have a subgroup that is a cusp Lie group. %and to determine conjugacies between these cusp Lie groups. % For each group we take a 2 dimensional 
%subalgebra centralizing a 1 dimensional Lie subalgebra, and then exponentiate both to the Lie group. Proposition \ref{4groups} checks if the %2 dimensional 
%group has a strictly convex orbit, the only remaining condition in the definition of cusp group. %Finally, we check in proposition \ref{notequiv} that none of the cusps are projectively equivalent.  
%We show there are 4 groups with convex orbit, subject to some conditions on $r,s$. 
%Proposition \ref{conjsubset} parametrizes conjugacy classes of groups in the families $\widehat{C}(r,s,t)$ and $\widehat{E_1}(r,s)$ by subsets in projective space. 
We use the methods of Proposition \ref{E1}. 

 % To finish the proof of theorem \ref{4cusps}, we check which %2 dimensional 
%  subgroups of the 15 groups in proposition \ref{all15} are cusp groups (proposition \ref{4groups}), and then  %First we find all subgroups with a 3 dimensional orbit. 
%Recall the orbits of a Lie group acting smoothly on a manifold are immersed submanifolds.  The \emph{dimension of an orbit} is the dimension of the tangent space of the immersed submanifold. 
 % For example, the group $N_2$ acts on $\R P ^3$ by $N_2 . [x_1 : x_2 : x_3 : 1] =  [x_1 : x_2 : x_3 : 1]  + [ax_2 + b x_3 + cx_4 : a  x_3 :0:0]$.  Therefore, $N_2$ has a single fixed point $ [ 1:0:0:0]$, and a 2 dimensional plane consisting of 1 dimensional orbit closures: $ [ \ast: \ast: 0 : \ast]$.  The rest of the orbits are 2 dimensional. 
%Proposition \ref{all15} gives the 15 possibilities for subgroups of $PGL_4(\R)$ isomorphic to $(\R^3, +)$. %It is easy to check that the groups $ F_2, N_2, N_5, N_6,$ and $N_8$, do not have a 3 dimensional orbit. 
%Therefore the subgroups of $PGL_4(\R)$ isomorphic to $(\R^3 ,+)$ with a 3 dimensional orbit are: $C, E_1, F_0, F_1, F_3, N_1, N_3, N_4, N_4'$ and $N_7$.   

Suppose $\theta: \R ^3 \to \mathfrak{g}$ is an isomorphism of Lie algebras, so $\theta(a,b,c) = \mathfrak{g}$, a Lie algebra.  Given $[r:s:t] \in \R P^2$, define the subalgebra $\mathfrak{g}[r:s:t] := \theta \{ (a,b,c) \in \R ^3 : ra +sb +tc =0 \}$.  Every 2-dimensional subalgebra of $\mathfrak{g}$ is obtained this way.  Set $G[r:s:t]= \exp \mathfrak{g}[r:s:t]$. 

  \begin{prop}\label{4groups} %The subgroups of  $PGL_4(\R)$ isomorphic to $(\R^3,+)$, which contain a subgroup isomorphic to $(\R^2,+)$ with a strictly convex orbit are: $C, E_1, F_1$ and $N'_4$. 
  Suppose $G$ is one of the groups in Theorem \ref{all15}, and $H$ is a cusp Lie subgroup of $G$.  Then $G$ is one of $C, E_1, F_1$ or $N_4 '$, and $H$ is conjugate in $PGL_4(\R)$ to one of 
  \begin{itemize} 
 \item  $C[r:s:t]$ with $rst(r+s+t)>0$, 
 \item $E_1 [r:s:-1]= E(r,s)$ with $ |s| <| r|/2$, 
 \item  $F_1[r:s:-1]  $ with $r >0$ 
 \item  $N_4' [r:s:-1]$. 
  \end{itemize} 
    \end{prop}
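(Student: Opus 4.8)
The plan is to run the method of Proposition \ref{E1} uniformly over all fifteen groups. For a fixed $G$ from Theorem \ref{all15} and a fixed $[r:s:t]\in\R P^2$, the subgroup $H=G[r:s:t]=\exp\mathfrak g[r:s:t]$ is two dimensional, so a generic orbit $S=H\cdot p$ is a surface. Following Proposition \ref{E1}, I would pick a convenient base point $p$, exponentiate the two free parameters, move $S$ into the affine chart $\{x_4=1\}$ (or whichever chart is complementary to the relevant supporting hyperplane), and write $S$ as the graph of a function $g_p:\R^2\to\R$ of two of the resulting coordinates. Proposition \ref{fund_form} then reduces the whole question to the sign of $\det\mathrm{I\!I}(g_p)$: the group $H$ is a cusp Lie group exactly when this determinant is positive. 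Because $H$ acts transitively on $S$ by affine maps, and affine maps scale $\mathrm{I\!I}$ by a nonnegative factor (the computation in Proposition \ref{fund_form}), the sign of $\det\mathrm{I\!I}$ is constant along $S$; hence it suffices to evaluate it at the image of a single well-chosen point, which keeps each computation finite.

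Before computing I would cut the list down. A strictly convex surface cannot lie in a projective plane, and $H\cdot p\subseteq G\cdot p$, so $G$ can contain a cusp Lie group only if some point of $\R P^3$ has a three-dimensional $G$-orbit. Reading this off the orbit-closure table in Theorem \ref{all15} eliminates $F_2,N_2,N_5,N_6$ and $N_8$, whose maximal orbit closures are planes. Next I would discard $N_7$ directly: in the chart $\{x_4=1\}$ it acts by translations, so every orbit of every subgroup is an affine subspace, hence flat. This leaves the candidates $C,E_1,F_0,F_1,F_3,N_1,N_3,N_4$ and $N_4'$.

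For the survivors I would carry out the graph-and-determinant computation explicitly. The case $E_1$ is already done in Proposition \ref{E1} and gives the condition $|s|<|r|/2$ (after normalizing $t=-1$). The same computation for $C$, $F_1$ and $N_4'$ produces, respectively, a determinant proportional to $rst(r+s+t)$, to $r$, and to a positive quantity, yielding the three conditions $rst(r+s+t)>0$, $r>0$, and ``all $[r:s:-1]$'' recorded in the statement. For $F_0,F_3,N_1,N_3$ and $N_4$ the determinant turns out to be nonpositive for every choice of $[r:s:t]$ and every base point, so these groups contain no cusp Lie subgroup. The crucial contrast is between $N_4$ and $N_4'$: as in Lemma \ref{N4}, the quadratic part appearing in the exponential is the indefinite form $ab$ for $N_4$ and the positive-definite form $a^2+b^2$ for $N_4'$, and this is exactly what makes $\det\mathrm{I\!I}$ change sign (a saddle) versus stay positive (a convex cap).

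Finally I would treat the degenerate slices of $\R P^2$. The forms listed for $E_1,F_1,N_4'$ assume $t\neq0$, scaled to $t=-1$; the remaining subalgebras (with $t=0$, or with $r$ or $s$ vanishing where relevant) must be shown to give graphs whose second fundamental form is degenerate or indefinite, so they contribute no new cusp groups and every cusp Lie subgroup is conjugate to one on the list. The main obstacle is the last part of the third step: verifying the \emph{negative} statements, that $F_0,F_3,N_1,N_3,N_4$ admit \emph{no} convex orbit, requires controlling $\det\mathrm{I\!I}$ over the whole $\R P^2$ of subgroups and all base points simultaneously, which is genuinely harder than exhibiting a single positive determinant, and the $N_4$ versus $N_4'$ sign analysis is the delicate point on which the whole classification turns.
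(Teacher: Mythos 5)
Your proposal follows essentially the same route as the paper: parametrize the two-dimensional subgroups as $G[r:s:t]$, realize a generic orbit as a graph over an affine chart, and invoke Proposition \ref{fund_form} to reduce the whole classification to the sign of $\det \mathrm{I\!I}(g)_p$, which is exactly what the paper does (via a Mathematica computation over all fifteen groups, producing the same determinants and the same four positive cases $C$, $E_1$, $F_1$, $N_4'$ you describe). Your preliminary elimination of $F_2$, $N_2$, $N_5$, $N_6$, $N_7$, $N_8$ via the orbit-closure table is a small streamlining the paper skips, but the substance of the argument is identical.
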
 

\begin{proof} 

%Let $G$ be a %2 dimensional 
%subgroup of %$C, E_1, F_0, F_1, F_3, N_1, N_3, N_4, N_4'$ or $N_7$,  
%one of the groups in proposition \ref{all15} 
%Let $H$ be one of the groups in proposition \ref{all15}, with Lie algebra $\mathfrak{h}$. Given $[r:s:t] \in \R P^2$, there is a 2 dimensional subgroup $G \subset H$
%isomorphic to $(\R^2,+)$, %obtained %as in proposition \ref{E1} by setting 
%defined by %the 2 dimensional subspace in the Lie algebra 
%$\mathfrak{g}:= \{ h \in \mathfrak{h} : ar +bs +ct =0 \} \subset \mathfrak{h}$. %where
%$ar+bs+ct=0$. %in the Lie algebra (where $r,s,t \in \R$ are fixed), 
%Set %taking the image of this 2 dimensional Lie algebra under the exponential map.
%$G := \exp \mathfrak{g}$.  
  Since every 2-dimensional subalgebra of a 3-dimensional Lie algebra is obtained as $\mathfrak{g}[r:s:t]$, all possible subgroups of % $C, E_1, F_0, F_1, F_3, N_1, N_3, N_4, N_4'$ and $N_7$ are realized. 
the groups in Theorem \ref{all15} isomorphic to $(\R^2, +)$ are realized as $G[r:s:t]$. % In all cases except $G=C$, we may perform a change of basis in the Lie algebra so that $t \neq 0$.  So unless $G=C$, assume without loss of generality that $t \neq 0$, and take $t=-1$ so $c = ar+bs$.   
Set $H := G[r:s:t]=\exp \mathfrak{g}[r:s:t] \cong (\R^2, +)$. 
 Let $p=[x_0: y_0: z_0:1] \in \R P^3$, and let $ S:= \{h.p: h \in H\}$ be the orbit of $p$. %Since $G$ is an affine group, then $f$ is the graph of a function. 
 The elements of $H$ preserve the affine patch that is the complement of $\{ [x_1 : x_2 : x_3 :0] \}$, so regard $H$ as a set of affine transformations of $\R ^3$, and $S$ as a surface that is the image of an orbit of $H$.  As in Proposition \ref{E1}, use projective equivalence to scale so $S$ consists of points  of the form  $[\ast : \ast: \ast:1]$, and dehomogenize (and perhaps perform a change of coordinates) so $S$ is the graph of a function $g: \R ^2 \to \R ^1$. % If the determinant of the second fundamental form, $\det \mathrm{I\!I}( f)_p$, is positive then $f$ is positively curved, and so the orbit of $p$ is convex.   
 By Proposition \ref{fund_form}, we know $\det \mathrm{I\!I}( g)_p >0$, if and only if $g$ is convex. 
 %Note that it is sufficient to find one point with a convex orbit, since a group acts transitively on points in an orbit.  
We wrote a Mathematica program to execute this process, and ran it on all %2 dimensional 
subgroups isomorphic to $(\R^2, +)$, of %$C, E_1, F_0, F_1, F_3, N_1, N_3, N_4, N_4'$ and $N_7$.  
the groups in Theorem \ref{all15}. 
 
 The 
 second fundamental form depends on $r,s$, and the point $p$. The only groups which give rise to positive definite second fundamental forms are $C, E_1, F_1$, and $N_4'$. %but the Hessian may depend on the %2 dimensional 
 %subgroup chosen.  
 Below is a chart showing the results of the computations for each of the groups.   The computations  are similar to those for $E_1$ given in Proposition \ref{E1}.
 
%Thus these 4 groups give rise to the 4 possible famiLies of generalized cusps in theorem \ref{4cusps}. 
$$\begin{array}{|c|c|}
\hline
\textrm{Group} &\begin{array}{c} \textrm{Orbit of Generic Point $p=[x_0: y_0: z_0 :1]$} \\ 
\det \mathrm{I\!I}(g)_p \textrm{ [up to scaling] }: r,s,t \textrm{ are fixed and } a,b,c \textrm{ are variables} \end{array}\\
\hline
C & \begin{array}{c} %( e^a x_0, e^b y_0, e^{ar+bs} z_0, e^{-ar - a -bs -b})\\  (-t + r - 3 s) (t + 3 r - s) (-t + r + s) (3t + r + s) x_0^2 y_0^2 z_0^2 
(e^{\frac{ar}{-ar-bs-ct}} x_0,e^{\frac{bs}{-ar-bs-ct}} y_0,e^{\frac{ct}{-ar-bs-ct}}z_0)\\
rst(r+s+t) x_0^2 y_0^2 z_0^2 
\end{array}\\
\hline
E_1 &\begin{array}{c} (e^a x_0, e^b y_0 + e^b(ar+bs)z_0, e^b z_0, e^{-a-2b})\\ %-16 (3r-s)(r+s)x_0^2 z_0^4 \end{array}\\
16 ( r-2s)(r+2s) x_0^2 z_0^4 \end{array} \\
\hline
F_0 & \begin{array}{c}(e^a x_0 + be^a y_0, e^a y_0, e^{-a}z_0 + e^{-a}(ar + bs), e^{-a})\\ -16 r^2 y_0^4\end{array}\\
\hline
F_1 & \begin{array}{c}(e^a x_0 + e^a (b s + a r)z_0, e^a y_0 + be^a z_0, e^a z_0, e^{-3a}) \\ 64 r z_0^6 \end{array}\\
\hline
F_2 & \begin{array}{c} (e^a x_0 + e^a b y_0 + e^a (ar +bs) z_0, e^a y_0 , e^a z_0, e^{-3a})\\0 \end{array} \\
\hline
F_3 &  \begin{array}{c} (e^a x_0 + b e^a y_0 + \frac{1}{2} e^a (b^2 + 2 b s + 2 a r)z_0, e^a y_0 + b e^a z_0, e^a z_0, e^{-3a}) \\  0 \end{array}\\
\hline
N_1& \begin{array}{c} (x_0 +a y_0 + ( \frac{a^2}{2} +b) z_0 +( \frac{a^3}{6} + b s + a (b + s), y_0 + az_0 ( \frac{a^2}{2} +b)  , z_0+a,1) \\ -1 \end{array} \\
\hline
N_2 & \begin{array}{c} (x_0 + a y_0 + (\frac{a^2}{2}+b) z_0 + ar+bs, y_0 + a z_0, z_0, 1) \\ 0 \end{array} \\
\hline 
N_3 &\begin{array}{c} (x_0+ (ar+bs) z_0,  y_0 +az_0 +  (\frac{a^2}{2}+b) ,  z_0+a, 1) \\   0 \end{array} \\
\hline
N_4 & \begin{array}{c}(x_0 + a y_0 + bz_0 +(ar+bs) ab ,y_0 + b, z_0+a,1 ) \\-1 \end{array} \\ 
\hline
N_4 ' &  \begin{array}{c}(x_0 + a y_0 + bz_0+(ar+bs)  \frac{1}{2}(a^2 + b^2), y_0+a, z_0+b,1 ) \\1 \end{array} \\
\hline
N_5 & \begin{array}{c} (x_0 + a z_0 + ar +bs, y_0 + bz_0, z_0, 1) \\ 0 \end{array} \\
\hline 
N_6 & \begin{array}{c} (x_0 + a y_0 + ar+bs , y_0, z_0+b, 1)\\0 \end{array} \\
\hline
N_7 &  \begin{array}{c}(x_0 +ar+bs, y_0+ b,z_0+ a,1 ) \\0  \end{array} \\
\hline
N_8 & \begin{array}{c}(x_0 + ay_0 + bz_0 + ar+bs, y_0, z_0, 1) \\0 \end{array} \\
\hline
\end{array}$$
\end{proof} 

Notice Theorem \ref{4groups} provides an alternate proof that $N_4$ and $N_4'$ are not conjugate: $N_4'$ has a 2-dimensional subgroup with convex orbit, and $N_4$ does not.

The cusp Lie group ${C}([r:s:t])$ has a convex orbit if $rst(r+s+t)>0$. The convexity condition is shown in figure \ref{4tri} in an affine patch where $t=1$. 

\begin{figure}[h]
\begin{center}
\includegraphics[scale=.5]{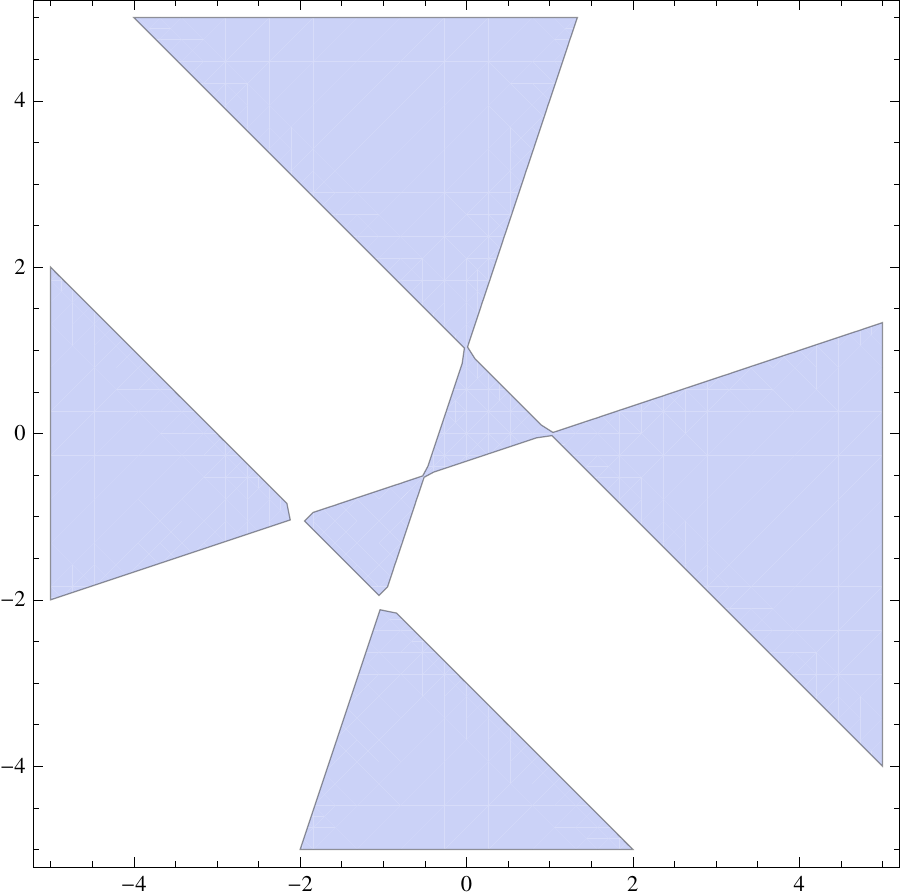}
\caption{  $rs(1+r+s)>0$ if and only if $(r,s)$ is in a shaded region. 
%The convexity condition is shown in an affine patch where $t=1$. Here $(-1 + r - 3 s) (1 + 3 r - s) (-1 + r + s) (3 + r + s) >0$ holds if and only if $r,s$ are in a  shaded region of the chart. }
}
\label{4tri}
\end{center}
\end{figure}   

\section{Conjugacy Classification of Cusp Lie Groups }%%%%%%%%%%%%%%%%%%%%%%%
 
This section determines which of the groups in Proposition \ref{4groups} are conjugate.  %Theorem \ref{all15} shows the groups $C, E_1, F_1$, and $N_4'$ are not conjugate since they have orbit closures which are not projectively equivalent. It is easy to see the cusp Lie groups ${C}([r:s:t])$, ${E}(r,s)$, ${F}(r)$, and ${N}$  have orbit closures which are not projectively equivalent, and so the cusp Lie groups are non-conjugate for any values of $r,s,t$.   
The cusp Lie groups ${C}([r:s:t])$, ${E}(r,s)$, ${F}[r:s:-1]$, and ${N_4'}[r:s:-1]$ are not conjugate since they have different numbers of repeated weights. 
This section gives conditions for when two groups in the same family are conjugate, and parametrizes conjugacy classes in each family by subsets of projective space. This will conclude the proof of Theorem \ref{4cusps}. %The groups $C$ and $E_1$ contain a continuum of %2 dimensional 
 %cusp subgroups.

Since convexity of the cusp Lie group $F[r:s:-1]$ depends only on $r$, define $F(r) := F[r:0:-1]$. Since $N_4'[r:s:-1]$ does not depend on $r,s$, notice $N= N_4'[1:1:-1]$. 

%\begin{defn}
%Let $\alpha:(r,s) \mapsto (s,r)$, $\beta: (r,s) \mapsto (\frac{-1}{s}, \frac{r}{s})$,  $\gamma: (r,s) \mapsto (\frac{-1-r}{s-r}, \frac{-r}{s-r})$ and $\delta: (r,s) \mapsto (\frac{-2}{r-s}, \frac{-s}{2r-s})$.   Define $W_1 \leq \textrm{Aut}(\R^2)$ to be the subgroup generated by $\alpha, \beta, \gamma$.  Set $W_2 \leq %\textrm{Aut}(\R P^1)$ to be the subgroup generated by $\alpha, \beta, \delta$. 
% \end{defn} 

% We will show the action of $W_1$ on $r,s$ gives all of the possibilities for reparametrizing $a,b$ in $\widehat{C}(r,s)$, and  the action of $W_2$ on $r,s$ gives all of the possibilities for reparametrizing $a,b$ in $\widehat{E_1}(r,s)$ . 
 %Now $|\alpha|=2$ and $|\beta| =3$.  Since $\gamma: \Q ^2 \to \Q ^2$, the group $W$ has infinitely many orbits. 

%The groups are conjugate if and only if the 2 dimensional subgroups of $PGL_4 (\R)$ are conjugate.   Since $C,E_1, F_1, N'_4$ act on $\R P^3$ to preserve different orbit closures, none of the famiLies are conjugate.  It remains to show that the cusps in the famiLies $\widehat{C}, \widehat{E_1},$ and $\widehat{F_1}$ are not projectively equivalent.

%Let $W \leq Aut (\R^2)$ be generated by $w_2: (r,s) \mapsto(s,r), w_3: (r,s) \mapsto (\frac{1}{r}, \frac{-s}{r})$, and $w_4: (r,s) \mapsto (\frac{r}{-1-r}, \frac{r}{(1+r)(-1-s)})$. 

\begin{prop}\label{conjsubset} 
\begin{enumerate}
\item Every cusp Lie group ${C}([r:s:t])$ is conjugate to $C([r':s':t'])$ for some $r' \geq s' \geq t' >0$.
%exactly one cusp Lie group where $r \geq s \geq t >0$.  
%$$(-t + r - 3 s) \geq (-t + r + s) \geq (3t + r + s) >0.$$
\item Every cusp Lie group ${E}(r,s)$ is conjugate to $E(1, s')$ for some $1/2 > s' \geq 0$. 
%exactly one cusp Lie group with $ 1/2 > s \geq 0$ and $r=1$. 
%\item The map $\phi: \R ^2 \to \textrm{subgroups of } PGL_3(\R)$ which sends $(r,s) \mapsto \widehat{C}(r,s)$ is 24 to 1, unless %$(r,s) =(0,0)$, in which case it is 4 to 1.
%$(r,s) =
%  \begin{cases}
%   (0,0) & \text{then } \phi \textrm{ is 4 to 1} \\
 %  (r,r), (-1,r), (0,r), (r,-1), \textrm{ or }(r,0)       & \text{then } \phi \textrm{ is 12 to 1.}
 % \end{cases}$
%\item $ \widehat{E_1}(r,s)$ is conjugate to $\widehat{E_1} (-r, s-2r)$, and $\psi : [r:s] \mapsto  \widehat{E_1}(r,s)$ is 2 to 1. 
%$\widehat{ E_1}(\frac{-s}{2}, \frac{-s}{2}+r)$.
%The map $\psi: \R P^1 \to \textrm{subgroups of } PGL_3(\R)$ which sends  $[r:s] \mapsto \widehat{E_1}(r,s)$ is 2 to 1
\item  If $r \neq 0$, then $ {F}(r)$ is conjugate to ${F}(1)$. 
\end{enumerate}

\end{prop}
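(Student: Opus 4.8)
The plan is to establish each of the three conjugacy statements by exhibiting explicit conjugating matrices, mostly diagonal or permutation matrices that preserve the upper-triangular structure of the relevant group while rescaling or permuting the parameters $r,s,t$. The unifying observation is that conjugation by a diagonal matrix $D = \mathrm{diag}(d_1,d_2,d_3,d_4)$ acts on the off-diagonal entries of a group element by the ratios $d_i/d_j$, and this translates into a linear action on the defining covector $[r:s:t] \in \R P^2$. So for each family I would first write down the general form of a group element of the cusp Lie group, conjugate by a candidate diagonal (or permutation) matrix, and read off how the parameters transform; then I would solve for the conjugating matrix that lands the parameters in the claimed normal form.

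\textbf{For part (1),} the group $C([r:s:t])$ is diagonal, so conjugation by any diagonal matrix acts trivially on it. The only conjugations preserving the family of diagonal groups (up to the overall determinant-one normalization) are by permutation matrices, which permute the diagonal entries $a,b,c,-a-b-c$ and hence permute the coefficients $r,s,t$ in the defining equation $ar+bs+ct=0$. I would check that permuting the four diagonal slots induces the full symmetric group action permuting $\{r,s,t\}$ (together with the implicit fourth coefficient, which the determinant-one condition fixes), and that the convexity condition $rst(r+s+t)>0$ from Proposition \ref{4groups} is invariant under these permutations. A short sign analysis then shows each orbit under this action contains a representative with $r' \geq s' \geq t' > 0$, giving the claimed normal form.

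\textbf{For part (2),} the group is $E(r,s) = E_1[r:s:-1]$, and the key is that conjugating by a diagonal matrix in the normalizer of $E_1$ rescales the single off-diagonal entry $e^b(ar+bs)$ by a positive constant, which has the effect of scaling the pair $(r,s)$ by a common positive factor, i.e. acting projectively on $[r:s]$. Since convexity requires $|s| < |r|/2$ (Proposition \ref{E1}), in particular $r \neq 0$, I can scale so that $r = 1$; a further reflection-type conjugation (swapping or negating coordinates compatibly with the block structure, as hinted by the ``permuting coordinates'' remark at the end of Proposition \ref{E1}) lets me arrange $s \geq 0$, yielding the representative $E(1,s')$ with $1/2 > s' \geq 0$. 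The main thing to verify carefully is that the sign-normalization genuinely comes from a real conjugation preserving $E_1$, and that distinct $s'$ in the range give genuinely distinct (non-conjugate) groups, matching the $E(s)$ of Theorem \ref{4cusps}.

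\textbf{For part (3),} since $F(r) = F[r:0:-1]$ already has $s=0$, I only need to rescale $r$ to $1$. Conjugation by an appropriate diagonal matrix scales the relevant off-diagonal entry, hence multiplies $r$ by a positive constant; because convexity forces $r > 0$, this scaling can always send $r$ to $1$. \textbf{The step I expect to be the main obstacle} is part (2): unlike the purely diagonal case of $C$, the group $E_1$ has a nontrivial unipotent part, so I must confirm that the diagonal conjugations I use actually normalize $E_1$ and act on $(r,s)$ by the simple projective rescaling I expect, rather than mixing the parameters in a more complicated affine way. Working out the precise induced action on $[r:s]$ — and checking it is compatible with the convexity region $|s|<|r|/2$ so that every convex $E(r,s)$ reaches the normal form — is where the real content lies; the $C$ and $F$ cases are then comparatively routine bookkeeping with permutation and diagonal matrices.
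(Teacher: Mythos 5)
Your proposal matches the paper's proof in all essentials: for $C$ one conjugates by (signed) permutation matrices, which permute the four weight coefficients $(r,s,t,-(r+s+t))$ and preserve the convexity condition, so a sign-and-sorting argument lands in $r'\geq s'\geq t'>0$; for $E(r,s)$ block-diagonal conjugations act on $[r:s]$ by projective rescaling while the swap of $e_1$ and $e_4$ flips the sign (the paper's matrices $Q$ and $P$), and convexity $|s|<|r|/2$ forces $r\neq 0$ so one normalizes to $E(1,s')$ with $0\leq s'<1/2$; for $F(r)$ a diagonal conjugation rescales $r$. The only divergences are cosmetic: the paper packages part (1) as a cuboctahedral fundamental domain for the $S_4$-action on $\PR(\mathfrak{a}^\ast)$ and also classifies \emph{all} possible conjugators via weight-space constraints (needed later for the ``exactly one'' in Theorem \ref{4cusps}, but not for this existence statement), and in part (3) it writes an explicit conjugator $R$ containing $\sqrt{r}$, which---exactly like your positive-scaling argument---really only treats $r>0$, the only convex and hence relevant case despite the statement's ``$r\neq 0$.''
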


\begin{center}
\begin{SCfigure}[][h]\label{funddom} 
\includegraphics[scale=.2]{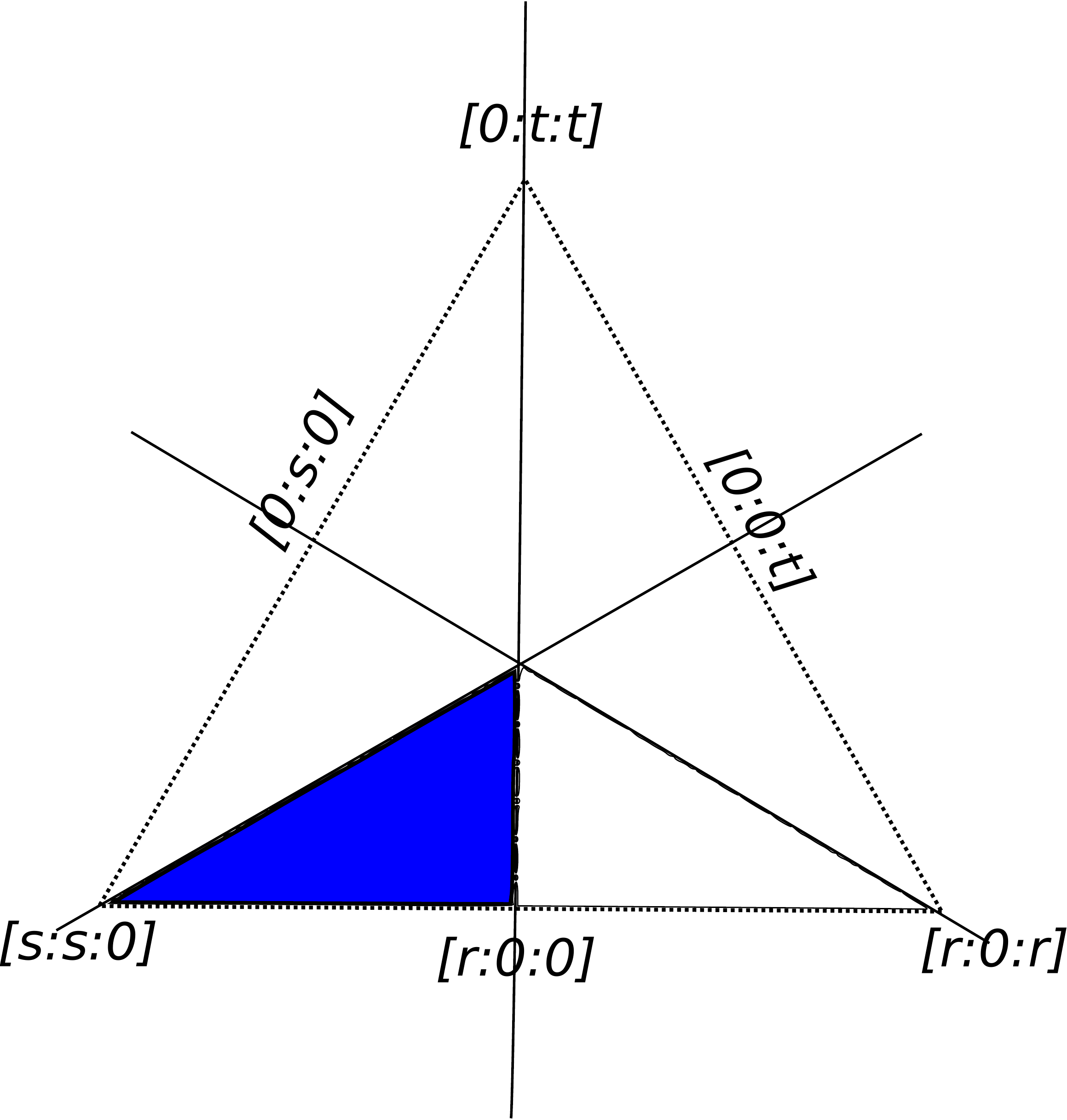}
\caption{A fundamental domain for $S_4 \curvearrowright \R P^2$,  where $r \geq s \geq t >0$}
\end{SCfigure}
\end{center}

\begin{proof}
 \begin{description}

\item[Conjugacy in] ${C}([r:s:t])$: 
Work in the setting of Lie algebras.  Let $\mathfrak{a} \cong \R^3$ be the Cartan subalgebra and $\mathfrak{a}^\ast$ be the dual.  So
$$ \mathfrak{a} =\Bigg \{
\left( \begin{array} {cccc}
x_1 & 0&0&0 \\
0&x_2 &0 &0 \\
0&0& x_3 &0 \\
0&0&0&x_4
\end{array} \right)  \Bigg |  x_i \in \R, \quad  \Sigma x_i =0 \Bigg \}. $$

%Let $e_i$ for $1 \leq i \leq 4$ denote the usual basis for $\R ^4$. Choose a basis of $\mathfrak{a}^\ast$ that is $f_i = e_i - e_4 $ for $1 \leq i \leq 3$. 

Let $\phi \in \mathfrak{a}^\ast$ be a linear functional.  Then $\textrm{ker} \phi$ is a 2-dimensional subalgebra of $\mathfrak{a}$.   Notice $\textrm{ker} \phi$ is unchanged by scaling $\phi$. There is a bijection between points of $\R P^2 \cong \PR( \mathfrak{a} ^\ast)$ and 2-dimensional subalgebras of $\mathfrak{a}$.  We find the subset of $\R P^2$ which parametrizes conjugacy classes of 2-dimensional subalgebras.    Given $[r:s:t] \in \R P^2$, recall the Lie algebra
$${\mathfrak{c}}([r:s:t])= \Bigg \{
\left( \begin{array} {cccc}
x_1 & 0&0&0 \\
0&x_2 &0 &0 \\
0&0& x_3 &0 \\
0&0&0&-(x_1 + x_2 + x_3) 
\end{array} \right)
\Bigg|  rx_1 + s x_2 + tx_3 =0  \Bigg \}.
$$
%the Lie algebra. 
By Proposition \ref{4groups}, ${C}([r:s:t])$ is convex when 
\begin{equation}\label{conv}
%(t+r-3s)(-t+3r-s)(t+r+s)(-3t+r+s) >0.
rst(r+s+t)>0.
\end{equation}
  %Since conjugacy of $\mathfrak{a}$ preserves the weight spaces, conjugacy must take convex subgroups to convex subgroups.  
  We will describe a fundamental domain in $\R P^2$ which parametrizes convex subgroups ${C}([r:s:t])$. 
%The symmetric group $S_4$ acts on $\R ^4$ by coordinate permutations.  The action fixes $\langle (1,1,1,1) \rangle$  and preserves  $\mathfrak{a} = \langle (1,1,1,1) \rangle ^ \perp \subset \R ^4$.   The action of $S_4$ on $\mathfrak{a}$ is a three dimensional irreducible action, and we will describe a fundamental domain for the action of $S_4$ on $[\langle ( 1,1,1,1) \rangle ^\perp] = [\mathfrak{a}] \cong \R P^2$. 
Conjugacy must permute the weight spaces, so ${C}([r:s:t])$ is conjugate to ${C}([r':s':t'])$ only if there is some $P \in GL_4(\R)$ which is a signed permutation of the standard basis of $\R ^4$, and ${C}([r:s:t]) = P {C}([r':s':t']) P^{-1}$. 

 Let $\{ e_1, e_2, e_3, e_4\}$ be the coordinate vectors in $\R ^4$, then $S_4$ acts on this set by  permutations.  Thus $S_4$ preserves the Cartan algebra, $\langle e_1 + e_2 + e_3 +e_4 \rangle ^ \perp = \mathfrak{a} \cong \R ^3$.  Define $f_i \in \R^3$ to be the orthogonal projection of $e_i$ onto $\mathfrak{a}$.  So $f_1 + f_2 + f_3 +f_4 =0$.  The $f_i$ are the vertices of a regular tetrahedron, $T$, centered at the origin.  The action of $S_4$ permutes $ \{ f_1, f_2, f_3, f_4 \}$.  So $S_4$ acts on $\mathfrak{a} \cong \R^3$ as the group of symmetries of $T$.    Therefore the subset of $\R P^2$ for which ${C}([r:s:t])$ is convex is divided into 24 fundamental domains under the action of $S_4$. 

%Take $S^3 \twoheadrightarrow \R P^2$, the double cover.  Condition \eqref{conv} may be interpreted as 4 planes which slice $S^3$ into regions, eight of which correspond to points $[r:s:t]$ where $\widehat{C}(r,s,t)$ has a convex orbit.  Projecting down to $\R P ^2$, we have 4 triangles.  An affine patch is shown in figure \ref{4tri}.

% Let $f_i$ be a basis for $\mathfrak{a}$ as before.  Then $S_3 = \textrm{Stab}(e_4) \subset S_4$ permutes the $f_i$.  Let $\sigma = (14) \in S_4$.  Then $\sigma (f_1) = -f_1$, $\sigma (f_2) = f_2 - f_1$, and $\sigma(f_3) = f_3 - f_1$.   Since $S_4$ is generate by the permutations $(12), (13)$ and $(14)$, the action on $f_i$ for $1 \leq i \leq 3$ is given by composition of the following matrices: 
%$$
%\left( \begin{array}{ccc} 
%0&1&0\\
%1&0&0\\
%0&0&1
%\end{array} \right) ,
%\left( \begin{array}{ccc} 
%0&0&1\\
%0&1&0\\
%1&0&0
%\end{array} \right) ,
%\left( \begin{array}{ccc} 
%-1&-1&-1\\
%0&1&0\\
%0&0&1
%\end{array} \right). 
%$$

%After talking to Sam, here is how we can do this.  The problem is that we cannot use affine (non-homogeneous) coordinates.  There is no good action of $S_4$ in those coordinates.  

Perform a change of coordinates: 
%$$\alpha_1 = t+r -3s, \alpha_2= -(-t +3r -s), \alpha_3 = t+r+s, \alpha_4= -3t +r +s $$
$$ \alpha_1 = r, \alpha_2 = s, \alpha_3 =t, \alpha_4 = -(r+s+t) $$
then the convexity condition becomes 
\begin{equation}\label{newconv} 
\alpha_1 \alpha_2 \alpha_3 \alpha_4 <0.
\end{equation}
   Notice 
   \begin{equation}\label{invtsum} 
   \alpha_1 + \alpha_2 + \alpha_3 + \alpha_4 =0,
   \end{equation}
 and \eqref{invtsum} is preserved under the action of $S_4$ on $\R ^4$.  In fact, this is the only linear equation preserved by the action of $S_4$. 
 
 %Changing signs and permuting by $(1234) \in S_4$, we may assume in \eqref{newconv} that $\alpha_1, \alpha_2, \alpha_3 >0$, and $\alpha_4 <0$.  
 Now $S_4$ acts transitively on $\{e_1, ..., e_4\}$, so $S_4$ acts transitively on the projection of $\{e_1,..., e_4\}$ in the plane \eqref{invtsum}.  This divides the double cover of $\PR(\mathfrak{a}^\ast)\cong \R  P^2$ into 14 regions as follows.  Let $S^2 \subset \R^3$ be the unit sphere, with the tiling of a cubeoctahedron shown in figure \ref{cuboct}.  There are 8 triangular regions where $rst(r+s+t)>0$ and ${C}([r:s:t])$ is convex; and 6 square regions where ${C}([r:s:t])$ is not convex.  The great circles correspond to $r=0, s=0, t=0,$ and $r+s+t=0$.

 \begin{figure}[h] 
  \begin{center}
 \includegraphics[scale=.15]{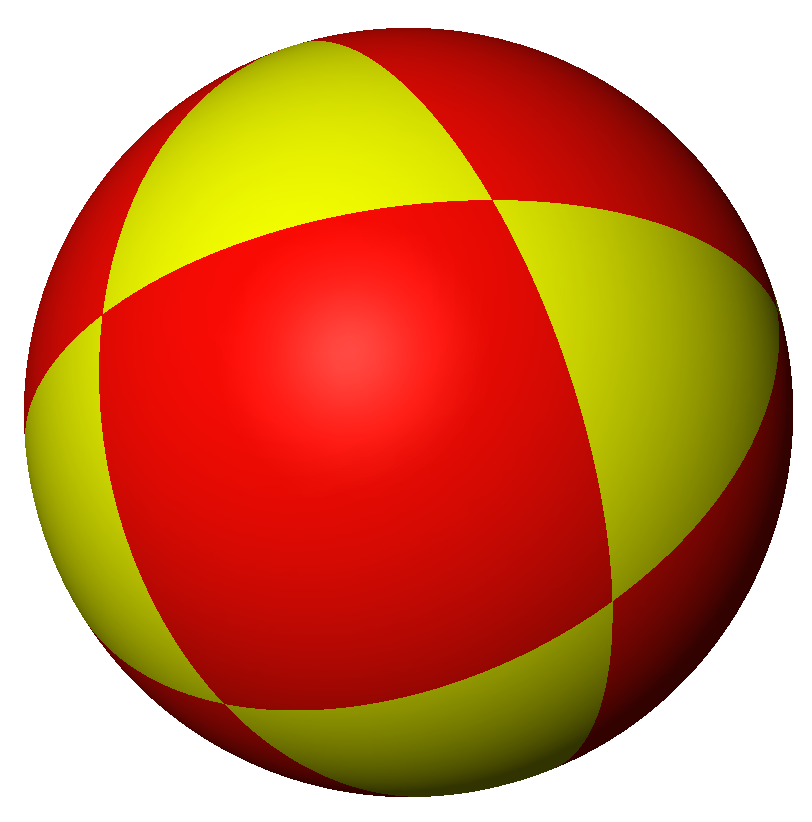}
 \caption{A cubeoctahedron.}  %has 8 triangular faces and 6 square faces. The 4 great circles correspond to $\alpha_i =0$,  $\alpha_1 \cdot \cdot \cdot \alpha_4 <0$ in the triangles,  and $\alpha_1 \cdot \cdot \cdot \alpha_4 >0$ in the squares.
   \label{cuboct}
  \end{center}
 \end{figure}

 The group of symmetries of a cubeoctahedron is the same as the group of symmetries of the cube: signed $4 \times 4$ permutation matrices.    Projecting down to $\R P^2$,  the group $S_4$ acts transitively on the triangles in figure \ref{4tri}, and a fundamental domain for the action is pictured in figure \ref{funddom}, where  $r \geq s \geq t >0$. %$\alpha_1 \geq \alpha_2 \geq \alpha_3 > 0$, which is $t+r-3s \geq -t +3r-s \geq 3t+r+s >0$.  
Therefore every cusp Lie group $C([r:s:t])$ is conjugate to a cusp Lie group with $r \geq s \geq t >0$.

\item[Conjugacy in ${E}(r,s)$ :]
 %We will show $\widehat{E_1} (r,s)  $ is conjugate to $\widehat{E_1} ( r-s, -s)$.  
%Given $(r,s) \neq (0,0)$, any conjugacy of ${E}(r,s)$ must preserve the set of eigenvectors $\{e_1, e_2, e_4\}$.   
If $(r,s) \neq (0,0)$, then $E(r,s)$ has 3 eigenvectors, $\{ e_1, e_2, e_4 \}$, so any conjugacy must permute them.  
The dimension of the generalized eigenspace associated to $e_2$ is 2, which is larger than the dimension of the generalized eigenspaces associated to $e_1$ and $e_4$. So, any conjugacy must fix $e_2$, permute $\{e_1, e_4\}$, and sends $e_3$ to any vector in the the generalized eigenspace spanned by $\langle e_2, e_3 \rangle$.  Thus any conjugacy is %a reparametrization of $a,b$, or 
%a composition of matrices of the form: 
by $Q$ or $QP$ where 
$$ Q= \left( \begin{array}{cccc}
 \alpha_1&0&0&0\\
 0& \alpha_2& \alpha_3 &0\\
 0&0& \alpha_4 &0\\
 0&0&0&\alpha_5
 \end{array} \right),
 \textrm{  and  } 
P=  \left( \begin{array}{cccc}
 0&0&0&1\\
 0& 1& 0 &0\\
 0&0& 1 &0\\
 1&0&0&0
 \end{array} \right), $$
 with $\alpha_1,..., \alpha_5 \in \R$. %with $\alpha_1\alpha_2 \alpha_4  \alpha_5=1$ .  
Since  $ Q E(r,s) Q^{-1}= {E}(\frac{\alpha_2}{\alpha_4}r,\frac{\alpha_2}{\alpha_4}s)$, and $[r:s]= [\frac{\alpha_2}{\alpha_4}r,\frac{\alpha_2}{\alpha_4}s] \in \R P^1$, conjugating by $Q$ does not change the group, and any conjugacy must be by $P$. %and $\psi$  is at most 2 to 1. %Conjugating by $P$ sends $\widehat{E_1} (r,s)  $  to $\widehat{E_1} ( r-s, -s)$, and $P$ has order 2. 

Notice ${E}(0,0)$ is conjugate to ${C}([-2:-1:0])$, so assume $(r,s) \neq (0,0)$, and $[r:s] \in \R P^1$. 
 %Since any conjugacy must preserve the set of eigenvectors and the dimension of the eigenspaces, the only possibly conjugacy is by a matrix of the form $P$.
Finally $E(r,s)$ is conjugate to %$E_1(\frac{-s}{2}, \frac{-s}{2}+r)$ 
$E(-r,s)$ by $P$. 
%$(r,s) = (\frac{-s}{2}, \frac{-s}{2}+r)$ if and only if $(r,s) =(0,0)$.  
%$(r,s) = (-r, s-2r)$ if and only if $(r,s) =(0,0)$. 
 %Conjugating by a diagonal matrix, we scale $r,s$ and take $[r:s] \in \R P^1$. 
%Therefore $\psi$ is 2 to 1. 
Since $(r-2s)(r+2s)>0$, every ${E}(r,s)$ is conjugate to a group where $r=1$ and $1/2>s\geq 0$.

\item[Conjugacy in ${F}(r)$:] Given $r,s \in \R$, the group  
$${F}(r,s) = \left( \begin{array}{cccc}
 e^a & e^a b & \frac{1}{2}e^a(b^2 + 2ar+2bs)&0 \\
 0 & e^a & e^a b &0  \\
 0 & 0 & e^a &0\\
 0&0&0& e^{-3a}  \end{array} \right), $$
is the image of $\mathfrak{f_1}[r:s:-1]$ under the exponential map, defined preceding Proposition \ref{4groups}.  First check ${F}(r,s)$ is conjugate to ${F}(r) $ by conjugating by the following matrix $S$. 

 $$
 S =  \left( \begin{array}{cccc}
1 &0&0&0\\
 0& 1 & s &0\\
 0&0&1 &0\\
 0&0&0&1
 \end{array} \right), 
 R =  \left( \begin{array}{cccc}
\frac{1}{\sqrt{r}} &1&1&0\\
 0& 1 & \sqrt{r}&0\\
 0&0&\sqrt{r} &0\\
 0&0&0&1
 \end{array} \right). 
 $$
%(This matrix has all 1's on the diagonal, and an $s$ in the $(2,3)$ entry. )
%by setting $\alpha_1= \alpha_4$, and $ \alpha_3 = s \alpha_2$.  
%Any conjugacy of $\widehat{F_1}(r)$ must fix the eigenvectors $e_1$ and $e_4$, since the dimension of the generalized eigenspaces are different.  
%Thus any conjugacy must be by a matrix of the form $Q$ in \eqref{PQ}.   
%To see that $\widehat{F_1}(r,s)$ is  conjugate to $\widehat{F_1}(r,0)$ set $\alpha_1= \alpha_4$, and $ \alpha_3 = s \alpha_2$. 
If $r \neq 0$, then  ${F}(r)$ is conjugate to $ {F}(1)$ by $R$.   Recall from Proposition \ref{4groups} that $F(r)$ has a convex orbit if and only if $r >0$.  Thus there is only one cusp in this family.  
%where $0 \neq y,z \in \R$.  Thus there are only two cusps in this family. 
%One may check by brute force and ignorance (conjugating by a matrix of the form $Q$) that $\widehat{F_1}(r)$ is not conjugate to $\widehat{F_1}(r')$ for $r \neq r'$. 
 %Each subgroup acts on $\R P^3$. Two subgroups are conjugate if and only if there is a projective transformation taking the orbit closures of one group to the orbit closures of another.   After dehomogenizing, we have shown the orbit of a generic point is a surface in $\R ^3$.  Two groups are conjugate if and only if the surfaces are affinely conjugate, which is if and only if the upper $3 \times 3$ blocks in the matrix subgroups are conjugate. 
% Since each of these $3 \times 3$ matrix subgroups have orbit closures which are projectively nonequivalent when acting on $ \R P^2$, none of the cusps are projectively equivalent. 
\end{description} \end{proof} 

This work is part of the author's PhD thesis.
\emph{Daryl Cooper} provided the inspiration for the project and Proposition \ref{fund_form}, and sparked many insightful discussions.  The author thanks \emph{Sam Ballas} and \emph{Jeff Danciger} for several helpful conversations. The referee provided excellent suggestions for improving the clarity of the paper. This work was partially supported by NSF grants DMSÐ0706887, 1207068 and 1045292, and by U.S. National Science Foundation grants DMS 1107452, 1107263,
1107367 RNMS: \emph{GEometric structures And Representation varieties (the GEAR Network).}

%\section*{References}

\end{document}